\definecolor{darkspringgreen}{rgb}{0.09, 0.45, 0.27}
\definecolor{darkspringgreen}{rgb}{0.09, 0.45, 0.27}
\patchcmd{\footnotemark}{\stepcounter{footnote}}{\refstepcounter{footnote}}{}{}
\newtheorem{thm}[subsubsection]{Theorem}
\newtheorem{cor}[subsubsection]{Corollary}
\newtheorem{lem}[subsubsection]{Lemma}
\newtheorem{prop}[subsubsection]{Proposition}
\theoremstyle{definition}
\newtheorem{definition}[subsubsection]{Definition}
\newtheorem{cond}[subsubsection]{Condition}
\newcommand{\stkout}[1]{\ifmmode\text{\sout{\ensuremath{#1}}}\else\sout{#1}\fi}
\theoremstyle{remark}
\newtheorem{rem}[subsubsection]{Remark}
\newcommand{\nc}{\newcommand}
\nc{\renc}{\renewcommand} \nc{\ssec}{\subsection}
\nc{\sssec}{\subsubsection}
\nc{\on}{\operatorname} \nc{\wh}{\widehat}
\nc\ol{\overline} \nc\ul{\underline} \nc\wt{\widetilde}
\nc{\BA}{{\mathbb{A}}} \nc{\BC}{{\mathbb{C}}} \nc{\BF}{{\mathbb{F}}}
\nc{\BD}{{\mathbb{D}}} \nc{\BG}{{\mathbb{G}}} \nc{\BQ}{{\mathbb{Q}}}
\nc{\BM}{{\mathbb{M}}} \nc{\BN}{{\mathbb{N}}} \nc{\BO}{{\mathbb{\bfO}}}
\nc{\BP}{{\mathbb{P}}} \nc{\BR}{{\mathbb{R}}}
\nc{\CA}{{\mathcal{A}}} \nc{\CB}{{\mathcal{B}}} \nc{\CalD}{{\mathcal{D}}}
\nc{\CE}{{\mathcal{E}}} \nc{\CF}{{\mathcal{F}}}
\nc{\CG}{{\mathcal{G}}} \nc{\CH}{{\mathcal{H}}}
\nc{\CI}{{\mathcal{I}}} \nc{\CK}{{\mathcal{K}}} \nc{\CL}{{\mathcal{L}}}
\nc{\CM}{{\mathcal{M}}} \nc{\CN}{{\mathcal{N}}}
\nc{\CO}{{\mathcal{\bfO}}} \nc{\CP}{{\mathcal{P}}}
\nc{\CQ}{{\mathcal{Q}}} \nc{\CR}{{\mathcal{R}}}
\nc{\CS}{{\mathcal{S}}} \nc{\CT}{{\mathcal{T}}}
\nc{\CU}{{\mathcal{U}}} \nc{\CV}{{\mathcal{V}}}  \nc{\CY}{{\mathcal Y}}
\nc{\CW}{{\mathcal{W}}} \nc{\CZ}{{\mathcal{Z}}}
\nc{\cM}{{\check{\mathcal M}}{}} \nc{\csM}{{\check{\mathcal A}}{}}
\nc{\oM}{{\overset{\circ}{\mathcal M}}{}}
\nc{\obM}{{\overset{\circ}{\mathbf M}}{}}
\nc{\oCA}{{\overset{\circ}{\mathcal A}}{}}
\nc{\obA}{{\overset{\circ}{\mathbf A}}{}}
\nc{\ooM}{{\overset{\circ}{M}}{}}
\nc{\osM}{{\overset{\circ}{\mathsf M}}{}}
\nc{\vM}{{\overset{\bullet}{\mathcal M}}{}}
\nc{\nM}{{\underset{\bullet}{\mathcal M}}{}}
\nc{\oD}{{\overset{\circ}{\mathcal D}}{}}
\nc{\obD}{{\overset{\circ}{\mathbf D}}{}}
\nc{\oA}{{\overset{\circ}{\mathbb A}}{}}
\nc{\op}{{\overset{\bullet}{\mathbf p}}{}}
\nc{\cp}{{\overset{\circ}{\mathbf p}}{}}
\nc{\oU}{{\overset{\bullet}{\mathcal U}}{}}
\nc{\ofZ}{{\overset{\circ}{\mathfrak Z}}{}}
\nc{\ff}{{\mathfrak{f}}} \nc{\fv}{{\mathfrak{v}}}
\nc{\fa}{{\mathfrak{a}}} \nc{\fb}{{\mathfrak{b}}}
\nc{\fd}{{\mathfrak{d}}} \nc{\fe}{{\mathfrak{e}}}
\nc{\fg}{{\mathfrak{g}}} \nc{\fgl}{{\mathfrak{gl}}}
\nc{\fh}{{\mathfrak{h}}} \nc{\fri}{{\mathfrak{i}}}
\nc{\fj}{{\mathfrak{j}}} \nc{\fk}{{\mathfrak{k}}} \nc{\fl}{{\mathfrak{l}}}
\nc{\fm}{{\mathfrak{m}}} \nc{\fn}{{\mathfrak{n}}}
\nc{\ft}{{\mathfrak{t}}} \nc{\fu}{{\mathfrak{u}}}
\nc{\fw}{{\mathfrak{w}}} \nc{\fz}{{\mathfrak{z}}}
\nc{\fp}{{\mathfrak{p}}} \nc{\fq}{{\mathfrak{q}}} \nc{\frr}{{\mathfrak{r}}}
\nc{\fs}{{\mathfrak{s}}} \nc{\fsl}{{\mathfrak{sl}}}
\nc{\hsl}{{\widehat{\mathfrak{sl}}}}
\nc{\hgl}{{\widehat{\mathfrak{gl}}}}
\nc{\hg}{{\widehat{\mathfrak{g}}}}
\nc{\chg}{{\widehat{\mathfrak{g}}}{}^\vee}
\nc{\hn}{{\widehat{\mathfrak{n}}}}
\nc{\chn}{{\widehat{\mathfrak{n}}}{}^\vee}
\nc{\fA}{{\mathfrak{A}}} \nc{\fB}{{\mathfrak{B}}} \nc{\fC}{{\mathfrak{C}}}
\nc{\fD}{{\mathfrak{D}}} \nc{\fE}{{\mathfrak{E}}}
\nc{\fF}{{\mathfrak{F}}} \nc{\fG}{{\mathfrak{G}}} \nc{\fH}{{\mathfrak{H}}}
\nc{\fI}{{\mathfrak{I}}} \nc{\fJ}{{\mathfrak{J}}}
\nc{\fK}{{\mathfrak{K}}} \nc{\fL}{{\mathfrak{L}}}
\nc{\fM}{{\mathfrak{M}}} \nc{\fN}{{\mathfrak{N}}}
\nc{\frP}{{\mathfrak{P}}} \nc{\fQ}{{\mathfrak{Q}}}
\nc{\fS}{{\mathfrak{S}}} \nc{\fT}{{\mathfrak{T}}} \nc{\fU}{{\mathfrak{U}}}
\nc{\fV}{{\mathfrak{V}}} \nc{\fW}{{\mathfrak{W}}}
\nc{\fX}{{\mathfrak{X}}} \nc{\fY}{{\mathfrak{Y}}}
\nc{\fZ}{{\mathfrak{Z}}}
\nc{\ba}{{\mathbf{a}}}
\nc{\bb}{{\mathbf{b}}} \nc{\bc}{{\mathbf{c}}}
\nc{\be}{{\mathbf{e}}} \nc{\bj}{{\mathbf{j}}} \nc{\bm}{{\mathbf{m}}}
\nc{\bn}{{\mathbf{n}}} \nc{\bp}{{\mathbf{p}}}
\nc{\bq}{{\mathbf{q}}} \nc{\br}{{\mathbf{r}}} \nc{\bt}{{\mathbf{t}}}
\nc{\bfu}{{\mathbf{u}}} \nc{\bv}{{\mathbf{v}}}
\nc{\bx}{{\mathbf{x}}} \nc{\by}{{\mathbf{y}}} \nc{\bz}{{\mathbf{z}}}
\nc{\bw}{{\mathbf{w}}} \nc{\bA}{{\mathbf{A}}}
\nc{\bB}{{\mathbf{B}}} \nc{\bC}{{\mathbf{C}}}
\nc{\bD}{{\mathbf{D}}} \nc{\bF}{{\mathbf{F}}} \nc{\bG}{{\mathbf{G}}}
\nc{\bH}{{\mathbf{H}}} \nc{\bI}{{\mathbf{I}}} \nc{\bJ}{{\mathbf{J}}}
\nc{\bK}{{\mathbf{K}}} \nc{\bM}{{\mathbf{M}}} \nc{\bN}{{\mathbf{N}}}
\nc{\bO}{{\mathbf{\bfO}}} \nc{\bS}{{\mathbf{S}}} \nc{\bT}{{\mathbf{T}}}
\nc{\bU}{{\mathbf{U}}} \nc{\bV}{{\mathbf{V}}} \nc{\bW}{{\mathbf{W}}}
\nc{\bX}{{\mathbf{X}}}
\nc{\bY}{{\mathbf{Y}}} \nc{\bP}{{\mathbf{P}}}
\nc{\bZ}{{\mathbf{Z}}} \nc{\bh}{{\mathbf{h}}}
\nc{\sA}{{\mathsf{A}}} \nc{\sB}{{\mathsf{B}}}
\nc{\sC}{{\mathsf{C}}} \nc{\sD}{{\mathsf{D}}}
\nc{\sE}{{\mathsf{E}}} \nc{\sF}{{\mathsf{F}}} \nc{\sG}{{\mathsf{G}}}
\nc{\sI}{{\mathsf{I}}} \nc{\sK}{{\mathsf{K}}} \nc{\sL}{{\mathsf{L}}}
\nc{\sfm}{{\mathsf{m}}} \nc{\sM}{{\mathsf{M}}} \nc{\sO}{{\mathsf{\bfO}}}
\nc{\sQ}{{\mathsf{Q}}} \nc{\sP}{{\mathsf{P}}}
\nc{\sT}{{\mathsf{T}}} \nc{\sZ}{{\mathsf{Z}}}
\nc{\sV}{{\mathsf{V}}} \nc{\sW}{{\mathsf{W}}}
\nc{\sfp}{{\mathsf{p}}} \nc{\sq}{{\mathsf{q}}} \nc{\sr}{{\mathsf{r}}}
\nc{\st}{{\mathsf{t}}} \nc{\sfb}{{\mathsf{b}}}
\nc{\sfc}{{\mathsf{c}}} \nc{\sd}{{\mathsf{d}}}
\nc{\sz}{{\mathsf{z}}}
\nc{\tA}{{\widetilde{\mathbf{A}}}}
\nc{\tB}{{\widetilde{\mathcal{B}}}}
\nc{\tg}{{\widetilde{\mathfrak{g}}}} \nc{\tG}{{\widetilde{G}}}
\nc{\TM}{{\widetilde{\mathbb{M}}}{}}
\nc{\tO}{{\widetilde{\mathsf{\bfO}}}{}}
\nc{\tU}{{\widetilde{\mathfrak{U}}}{}} \nc{\TZ}{{\tilde{Z}}}
\nc{\tx}{{\tilde{x}}} \nc{\tbv}{{\tilde{\bv}}}
\nc{\tfP}{{\widetilde{\mathfrak{P}}}{}} \nc{\tz}{{\tilde{\zeta}}}
\nc{\tmu}{{\tilde{\mu}}}
\nc{\urho}{\underline{\rho}} \nc{\uB}{\underline{B}}
\nc{\uC}{{\underline{\mathbb{C}}}} \nc{\ui}{\underline{i}}
\nc{\uj}{\underline{j}} \nc{\ofP}{{\overline{\mathfrak{P}}}}
\nc{\oB}{{\overline{\mathcal{B}}}}
\nc{\og}{{\overline{\mathfrak{g}}}} \nc{\oI}{{\overline{I}}}
\nc{\eps}{\varepsilon} \nc{\hrho}{{\hat{\rho}}}
\nc{\blambda}{{\boldsymbol{\lambda}}} \nc{\bmu}{{\boldsymbol{\mu}}} \nc{\bnu}{{\boldsymbol{\nu}}}
\nc{\one}{{\mathbf{1}}} \nc{\two}{{\mathbf{t}}}
\nc{\Sym}{\mathop{\operatorname{\rm Sym}}}
\nc{\Tot}{{\mathop{\operatorname{\rm Tot}}}}
\nc{\Spec}{\mathop{\operatorname{\rm Spec}}}
\nc{\Ker}{{\mathop{\operatorname{\rm Ker}}}}
\nc{\Isom}{{\mathop{\operatorname{\rm Isom}}}}
\nc{\Hilb}{{\mathop{\operatorname{\rm Hilb}}}}
\nc{\deeq}{{\mathop{\operatorname{\rm deeq}}}}
\nc{\End}{{\mathop{\operatorname{\rm End}}}}
\nc{\Ext}{{\mathop{\operatorname{\rm Ext}}}}
\nc{\Hom}{{\mathop{\operatorname{\rm Hom}}}}
\nc{\CHom}{{\mathop{\operatorname{{\mathcal{H}}\it om}}}}
\nc{\GL}{{\mathop{\operatorname{\rm GL}}}}
\nc{\Stab}{{\mathop{\operatorname{\rm Stab}}}}
\nc{\Mir}{{\mathop{\operatorname{\rm Mir}}}}
\nc{\gr}{{\mathop{\operatorname{\rm gr}}}}
\nc{\Id}{{\mathop{\operatorname{\rm Id}}}}
\nc{\perf}{{\mathop{\operatorname{\rm perf}}}}
\nc{\ver}{{\mathop{\operatorname{\rm ver}}}}
\nc{\glob}{{\mathop{\operatorname{\rm glob}}}}
\nc{\ratio}{{\mathop{\operatorname{\rm ratio}}}}
\nc{\Ran}{{\mathop{\operatorname{\rm Ran}}}}
\nc{\Br}{{\mathop{\operatorname{\rm Br}}}}
\nc{\BS}{{\mathop{\operatorname{\rm BS}}}}
\nc{\Conf}{{\mathop{\operatorname{\rm Conf}}}}
\nc{\mix}{{\mathop{\operatorname{\rm mix}}}}
\nc{\Lus}{{\mathop{\operatorname{\rm Lus}}}}
\nc{\KD}{{\mathop{\operatorname{\rm KD}}}}
\nc{\defi}{{\mathop{\operatorname{\rm def}}}}
\nc{\length}{{\mathop{\operatorname{\rm length}}}}
\nc{\supp}{{\mathop{\operatorname{\rm supp}}}}
\nc{\Fun}{{\mathop{\operatorname{\rm Funct}}}}
\nc{\Hei}{{\mathop{\operatorname{\rm Heis}}}}
\nc{\HC}{{\mathcal H}{\mathcal C}}
\nc{\Cliff}{{\mathsf{Cliff}}}
\nc{\loc}{{\operatorname{loc}}}
\nc{\Fl}{{\mathbf{Fl}}} 
\nc{\Ffl}{{\mathcal{F}\ell}}
\nc{\Fib}{{\mathsf{Fib}}}
\nc{\Coh}{{\mathsf{Coh}}} \nc{\FCoh}{{\mathsf{FCoh}}}
\nc{\Perf}{{\mathsf{Perf}}}
\nc{\reg}{{\text{\rm reg}}}
\nc{\gvee}{{\mathfrak g}^{\!\scriptscriptstyle\vee}}
\nc{\tvee}{{\mathfrak t}^{\!\scriptscriptstyle\vee}}
\nc{\nvee}{{\mathfrak n}^{\!\scriptscriptstyle\vee}}
\nc{\bvee}{{\mathfrak b}^{\!\scriptscriptstyle\vee}}
       \nc{\rhovee}{\rh\bfO^{\!\scriptscriptstyle\vee}}
\nc{\cplus}{{\mathbf{C}_+}} \nc{\cminus}{{\mathbf{C}_-}}
\nc{\cthree}{{\mathbf{C}_*}} \nc{\Qbar}{{\bar{Q}}}
\nc{\Gtimes}{\vphantom{j^{X^2}}\smash{\overset{G}{\vphantom{\rule{0pt}{0.30em}}\smash{\times}}}}
\nc{\sGtimes}{\vphantom{j^{X^2}}\smash{\overset{\mathsf G}{\vphantom{\rule{0pt}{0.30em}}\smash{\times}}}}
\nc{\bOmega}{{\overline{\Omega}}}
\nc{\seq}[1]{\stackrel{#1}{\sim}}
\nc{\Char}{{\operatorname{char}}}
\nc{\aff}{{\operatorname{aff}}}
\nc{\fin}{{\operatorname{fin}}}
\nc{\mir}{{\operatorname{mir}}}
\nc{\triv}{{\operatorname{triv}}}
\nc{\ext}{{\operatorname{ext}}}
\nc{\righ}{{\operatorname{right}}}
\nc{\lef}{{\operatorname{left}}}
\nc{\forg}{{\operatorname{forg}}}
\nc{\fid}{{\operatorname{fd}}}
\nc{\modu}{{\operatorname{-mod}}}
\nc{\Gr}{{\mathbf{Gr}}}
\nc{\FT}{{\operatorname{FT}}}
\nc{\Mat}{{\operatorname{Mat}}}
\nc{\MSt}{{\operatorname{MSt}}}
\nc{\sph}{{\operatorname{sph}}}
\nc{\GR}{{\mathbf{Gr}}}
\nc{\Perv}{{\operatorname{Perv}}}
\nc{\Rep}{{\operatorname{Rep}}}
\nc{\Fact}{{\operatorname{FactMod}}}
\nc{\Ind}{{\operatorname{Ind}}}
\nc{\IC}{{\operatorname{IC}}}
\nc{\Bun}{{\operatorname{Bun}}}
\nc{\Proj}{{\operatorname{Proj}}}
\nc{\pt}{{\operatorname{pt}}}
\nc{\bfmu}{{\boldsymbol{\mu}}}
\nc{\bfomega}{{\boldsymbol{\omega}}}
\nc{\calM}{\mathcal M}
\nc{\calA}{\mathcal A}
\nc{\calO}{\mathcal O}
\nc{\cC}{\mathcal C}
\nc{\CC}{\mathbb C}
\nc{\calN}{\mathcal N}
\nc{\grg}{\mathfrak g}
\nc{\tslash}{/\!\!/\!\!/}
\nc\grt{\mathfrak t}
\nc\bfM{\mathbf M}
\nc\bfN{\mathbf N}
\nc\ZZ{\mathbb{Z}}
\nc\calC{\mathcal C}
\nc\calF{\mathcal F}
\nc\calX{\mathcal X}
\nc\calY{\mathcal Y}
\nc\QCoh{\operatorname{QCoh}}
\nc\Shv{\operatorname{Shv}}
\nc\ShvD{\operatorname{D-mod}}
\nc\IndCoh{\operatorname{IndCoh}}
\nc\Maps{\operatorname{Maps}}
\nc\Dmod{D-\operatorname{mod}}
\newcommand\Hecke{\operatorname{Hecke}}
\nc{\calD}{\mathcal D}
\nc\bfO{\mathbf O}
\nc\bfF{\mathbf F}
\nc\GG{\mathbb G}
\nc\calK{\mathcal K}
\nc{\calG}{\mathcal G}
\nc\RHom{\operatorname{RHom}}
\nc\Res{\operatorname{Res}}
\nc\Av{\operatorname{Av}}
\nc\Eis{\operatorname{Eis}}
\nc\oblv{\operatorname{oblv}}
\nc\pr{\operatorname{pr}}
\nc\unit{\operatorname{unit}}
\nc\add{\operatorname{add}}
\nc\ind{\operatorname{ind}}
\nc\coind{\operatorname{coind}}
\nc\sprd{\operatorname{sprd}}
\nc\projection{\operatorname{projection}}
\nc\averaging{\operatorname{averaging}}
\nc\pullback{\operatorname{pullback}}
\nc\grs{\mathfrak s}
\nc{\tilX}{\widetilde X}
\nc\calB{\mathcal B}
\nc\calS{\mathcal S}
\nc\calT{\mathcal T}
\nc\calZ{\mathcal Z}
\nc\LS{\operatorname{LocSys}}
\nc\bfL{\on{\mathbf L}}
\newcommand*\circled[1]%{\tikz[baseline=(char.base)]{
\newcommand{\raisemath}[1]{\mathpalette{\raisem@th{#1}}}
\newcommand{\raisem@th}[3]{\raisebox{#1}{$#2#3$}}
\nc{\binlim}[2][]{\def\@tempa{#1}\@ifnextchar^{\@binlim{#2}}{\@binlim{#2}^{}}}
\def\@binlim#1^#2{\mathbin{\@ifempty{#2}{\mathop{#1}}{\mathop{#1}\@xp\displaylimits\@tempa^{#2}}}}
\nc\cX{{\mathcal X}}
\nc\Gm{{\mathbb G_m}}
\renc\Hecke{\mathit{\CH\kern-.2ex ecke}}
\nc\Fq{\mathbb F_q}
\nc\bGO{{\bG_\bO}}
\nc\opp{{\on{op}}}
\nc\tbx{\binlim{\widetilde\boxtimes{}}}
\nc\phitau{\varphi\tau}
\newenvironment{i-ii-iii}{%
\begin{enumerate}
}%
{\end{enumerate}}
\nc\ceil[1]{\lceil#1\rceil}  \nc\floor[1]{\lfloor#1\rfloor}
\nc\Lie{\on{Lie}}
\def\arxiv#1{\href{http://arxiv.org/abs/#1}{\tt arXiv:#1}} \let\arXiv\arxiv
\nc\kap{\kappa}
\nc\gra{\mathfrak a}
\nc\gl{\mathfrak{gl}}
\nc\sTr{\operatorname{sTr}}
\nc\hatG{\widehat{G}}
\nc\calL{\mathcal L}
\nc\Whit{\operatorname{Whit}}
\nc\KM{\operatorname{KM}}
\nc\KL{\operatorname{KL}}
\renewcommand{\subsection}{\@startsection{subsection}{2}{0pt}{-3ex
plus -1ex minus -0.2ex}{-2mm plus -0pt minus
-2pt}{\normalfont\bfseries}} \makeatother
\numberwithin{equation}{subsection}
\nc\mto{\mapsto }
\nc\en{\enspace }
\numberwithin{equation}{section}
\newtheorem*{rep@theorem}{\rep@title}
\newcommand{\newreptheorem}[2]{%
\newenvironment{rep#1}[1]{%
 \def\rep@title{#2 \ref{##1}}%
 \begin{rep@theorem}}%
 {\end{rep@theorem}}}
 \newcommand{\ncmd}{\newcommand*}
\newcommand{\rncmd}{\renewcommand*}
\ncmd{\DMO}{\DeclareMathOperator}
\ncmd{\ncmdd}[2]{\ncmd{#1}{{#2}}}
\ncmd{\DefOps}[1]{\def\OPERATOR@NAME##1{\DeclareMathOperator{##1}{\expandafter\@gobble\string##1}}%%
    \def\OPERATOR@LIST##1{\ifcat\noexpand\relax\noexpand##1\OPERATOR@NAME##1\expandafter\OPERATOR@LIST\fi}%%
    \OPERATOR@LIST#1.}
\ncmd{\DefRm}[1]{\def\OPERATOR@NAME##1{\ncmd{##1}{\mathrm{\expandafter\@gobble\string##1}}}%%
    \def\OPERATOR@LIST##1{\ifcat\noexpand\relax\noexpand##1\OPERATOR@NAME##1\expandafter\OPERATOR@LIST\fi}%%
    \OPERATOR@LIST#1.}
\ncmd{\phtr}[2]{\lefteqn{#1{\phantom{#2}}}#2}
\def\Alphabet{ABCDEFGHIJKLMNOPQRSTUVWXYZ}
\def\newalph#1#2{\begingroup
    \def\procL@tt@r##1{%
        \@xp\gdef\csname#1\endcsname{#2}}%
    \proc@lph@bet\endgroup}
\def\proc@lph@bet{\@xp\prlist@\Alphabet\relax}
\def\prlist@#1{\ifx#1\relax\else\procL@tt@r{#1}\@xp\prlist@\fi}
\ncmd{\SmSub}[2][]{_\bgroup #2\smsub@{#1}}
\def\smsub@#1{\@ifnextchar_{#1\@smsb}{\egroup}}
\def\@smsb_#1{#1\smsub@{}}
\ncmd{\SmSup}[2][]{^\bgroup #2\smsup@{#1}}
\def\smsup@#1{\@ifnextchar^{#1\@smsp}{\@ifnextchar'{\prime\@xp\smsup@\@xp{\@xp}\@gobble}{\egroup}}}
\def\@smsp^#1{#1\smsup@{}}
\def\@binlim#1_#2{\mathbin{\@ifempty{#2}{\mathop{#1}}{\mathop{#1}\@xp\displaylimits\@tempa_{#2}}}}
\ncmd{\RedNote}[1]{\Text{%
  \textcolor{red}{\sffamily#1}}}
\ncmd\Text[1]{\ifmmode\text{#1}\else#1\fi}
\ncmd\todo[1][todo]{\RedNote{#1}%
 \ifx\undefined\@todoflag
   \global\let\@todoflag\relax
  \AtEndDocument{\par\vspace{3em}\noindent \RedNote{TO DO:}}%
 \fi
 \edef\pagenmbr{\thepage}%
 \expandafter\redendnote
 \expandafter{\pagenmbr}{#1}}
\ncmd\redendnote[2]{\AtEndDocument{\\$\bullet$\ \RedNote{page #1: #2}}}
\ncmd\hidetodos{\rncmd\todo\relax}
\theoremstyle{remark}
\ncmd{\Fp}{{\FF_p}}
\DMO{\cHom}{\text{\textrm{\itshape{\cH}\kern-.2ex{}om}}}
\DMO{\cEnd}{\text{\textrm{\itshape{\cE}\kern-.2ex{}nd}}}    \DMO{\cExt}{\text{\textrm{\itshape{\cE}\kern-.2ex{}xt}}}
\ncmd{\sff}{\mathsf f}
\let\Im\undefined   \let\det\undefined
\ncmd{\young}[1]{\vcenter{\begin{Young}#1\crcr\end{Young}}}
\ncmd{\RGam}{\text{\upshape R}\Gamma}
\DMO{\chr}{char}
\ncmd{\angs}[1]{\langle#1\rangle}
\ncmd{\hmod}{\text{\upshape-mod}}
\ncmd{\cxym}[1]{\ensuremath{\vcenter{\xymatrix{#1}}}}
\def\arxiv#1{\href{http://arxiv.org/abs/#1}{\tt arXiv:#1}} \let\arXiv\arxiv
\title[Iwahori Kontsevich compactification]{A resolution of singularities of Drinfeld compactification with an Iwahori structure}
\date{}
\author{Ruotao Yang}
\thanks{{\bf Mathematics Subject Classification (2020).}
 14D23, 14D24, 14E15.}
\thanks{{\bf Key words.} Moduli stack of bundles, Geometric Langlands program, Category $\mathcal{O}$}
\begin{document}
\maketitle

\begin{abstract}
The Drinfeld compactification $\overline{\Bun}{}_B'$ of the moduli stack $\Bun_B'$ of Borel bundles on a curve $X$ with an Iwahori structure is important in the geometric Langlands program. It is closely related to the study of representation theory. In this paper, we construct a resolution of singularities of it using a modification of Justin Campbell's construction of the Kontsevich compactification. Furthermore, the moduli stack ${\Bun}_B'$ admits a stratification indexed by the Weyl group. For each stratum, we construct a resolution of singularities of its closure. Then we use this resolution of singularities to prove a universally local acyclicity property, which is useful in the quantum local Langlands program.%The Drinfeld compactification with an Iwahori structure admits and its some closed substacks. We use them to prove the universally local acyclicity of the !-extension D-module on the Drinfeld compactification with an Iwahori structure.
%This paper considers a modification of Justin Campbell's construction of the Kontsevich compactification, and construct a resolution of singularities of the Drinfeld compactification with an Iwahori structure. 
\end{abstract}
\setcounter{tocdepth}{2}
\section{Introduction}
\subsection{Notations}
Let $G$ be a connected reductive group defined over an algebraically closed field $\mathsf{k}$. Let $B$ be a Borel subgroup, and $T$ be the maximal torus of $B$. We denote by $W$ the Weyl group of $G$. 

For a projective, smooth, and connected curve $X$ of genus $g$ over $\mathsf{k}$, we denote by $\Bun_B$ the algebraic stack classifying  $B$-bundles on $X$. Fix a point $x\in X$, we denote by $\Bun_B'$ the algebraic stack classifying the data $(\mathscr{P}_B, \epsilon_{{x}})$, where $\mathscr{P}_B$ is a $B$-bundle on $X$ and $\epsilon_{{x}}$ is a $B$-reduction of the induced $G$-bundle $\mathscr{P}_B\underset{B}{\times}G$ at $x$. Similarly, we can define $\Bun_G, \Bun_G'$.

\subsection{Work of \cite{[Camp]}}The singularities of the Drinfeld compactification $\overline{\Bun}_B$ contain important representation theoretical information. In \cite{[Camp]}, the author constructed an algebraic stack $\overline{\Bun}_B^{K}$ which is a resolution of singularities of  $\overline{\Bun}_B$. It turns out that this algebraic stack is very useful to study $\overline{\Bun}_B$. First of all, $\overline{\Bun}_B^{K}$ is proper over $\overline{\Bun}_B$. Secondly, $\overline{\Bun}_B^K$ has a stratification by degree and defect. The projection map $\overline{\Bun}_B^K\to \overline{\Bun}_B$ preserves degree and defect. Thirdly, the complement of the isomorphic locus in $\overline{\Bun}_B^K$ is a normally crossing divisor. This property makes the calculation of singular support of the intersection cohomology ($!$ or $*$-extension) sheaf on $\overline{\Bun}_B^K$ much easier than the corresponding sheaf on $\overline{\Bun}_B$. Here, the sheaf means 'D-module' if $\Char(\mathsf{k})=0$ and '$\ell$-adic sheaf' if $\Char(\mathsf{k})=p>0$ and $p\neq \ell$.

\subsection{Content of this paper}
Fix a point $x\in X$, we consider the algebraic stack $\overline{\Bun}_B'$ which is the Drinfeld compactification of $\Bun_B'$. 
%Our aim is to construct an analog of $\overline{\Bun}_B^K$ for $\overline{\Bun}_B'$. %This algebraic stack can be used to define the (geometric) Iwahori Eseinstein series functor. Namely, given a kernel $\mathcal{G}\in \Shv(\overline{\Bun}_B')$, the Iwahori Eseinstein series functor $\Eis_{\mathcal{G}}: \Shv(\overline{\Bun}_G')\to \Shv(\overline{\Bun}_T)$ associated with $\mathcal{G}$ is defined as $\Eis_{\mathcal{G}}(\mathcal{F})=q_*(p^!(\mathcal{F})\overset{!}{\otimes} \mathcal{G})$. Here, $p, q$ are projections from $\overline{\Bun}_B'$ to $\overline{\Bun}_G'$ and $\overline{\Bun}_T$.

Rather than $\overline{\Bun}_B'$, it is more interesting to study substacks of $\overline{\Bun}_B'$.  The algebraic stack $\Bun_B'$ admits a stratification $\{\Bun_B^w,\ w\in W\}$ (see Section \ref{rela} for the definition) by the relative position of the restriction of the $B$-bundle and the Iwahori structure. We are particularly interested in the geometry of the closure of ${\Bun}^w_B$ in $\overline{\Bun}_B'$. The singularities of $\Bun_B^w$ in its closure $\overline{\Bun}_B^w$ contain representation theory information different from the one that we can get from the singularities of $\overline{\Bun}_B$.

In order to define a resolution of singularities of $\overline{\Bun}^w_B$, first, we construct a smooth algebraic stack $\overline{\Bun}^{K,'}_{B,x}$. It is a resolution of singularities of $\overline{\Bun}_B'$. It enjoys favorable properties as $\overline{\Bun}_B^K$. The important note here is that $\overline{\Bun}^{K,'}_{B,x}$ admits a relative position map \eqref{eq 3.2} to $B\backslash G/B$. 

Based on the algebraic stack $\overline{\Bun}^{K,'}_{B,x}$, we construct an algebraic stack $\BS^w$ using the Bott-Samelson resolution in Section \ref{def 3.3.1}. We will show that $\BS^w$ is the resolution of singularities of $\overline{\Bun}_B^{w}$ when the degree is sufficiently antidominant with respect to the defect, and we will see that $\BS^w$ enjoys many favorable properties as $\overline{\Bun}_B^K$. 

Then, in Section \ref{sect 4.2}, we will use $\BS^w$ to prove the universally local acyclicity of the $!$-extension sheaf $j_{!, \overline{\Bun}_B^w}$ on $\overline{\Bun}_B^{w}$. With the same argument, we can also prove the universally local acyclicity of the $*$-extension sheaf and the intersection cohomology sheaf on $\overline{\Bun}_B^{w}$.

\subsection{Relation with \cite{[Camp]}}
This paper considers a refinement of \cite{[Camp]} in the sense of Iwahori Eisenstein series functor. Given a sheaf $\mathcal{G}$ on $\overline{\Bun}_B'$ as the {kernel}, the Iwahori {Eisenstein} series functor $\Eis_{\mathcal{G}}$ associated with $\mathcal{G}$ is defined as \[\Eis_{\mathcal{G}}(\mathcal{F}):=p_{T,*}(p_G^!(\mathcal{F})\overset{!}{\otimes} \mathcal{G}).\] Here, $p_G, p_T$ are projections from $\overline{\Bun}_B'$ to $\overline{\Bun}_G'$ and ${\Bun}_T$, respectively.

The pullback of the intersection cohomology sheaf ${\IC_{\overline{\Bun}_B}}$ on $\overline{\Bun}_B$ along the projection \[\overline{\Bun}_B'\longrightarrow \overline{\Bun}_B\] is the intersection cohomology sheaf ${\IC_{\overline{\Bun}{}_B'}}$ on ${\overline{\Bun}{}_B'}$.  Hence, the composition of the usual Eseinstein series functor associated with ${\IC_{\overline{\Bun}_B}}$ and the pullback functor $\Shv(\Bun_G)\longrightarrow \Shv(\Bun_G')$ is isomorphic to the Iwahori Eseinstein series functor associated with ${\IC_{\overline{\Bun}{}_B'}}$. The work of \cite{[Camp]} provides a powerful tool to study this functor. This functor admits a filtration by the Iwahori Eseinstein series functors associated with the sheaves  $j_{!, \overline{\Bun}_B^w}$  studied in this paper. 

The constructions and proofs used in this paper are similar to those of \cite{[Camp]}.
\subsection{Application}
The application of $j_{!, \overline{\Bun}_B^w}$ has not been explored, but it has already shown its importance in the fundamental local equivalence of the geometric Langlands program proposed by D. Gaitsgory and J. Lurie.

Roughly speaking, using $j_{!, \overline{\Bun}_B^w}$ (more precisely, its twisted version $j^c_{!, \overline{\Bun}_B^w}$) as a kernel, we can construct an equivalence functor from the twisted Whittaker category on the affine flags to {$\Rep_q^\mix(\check{G})$, the category $\mathcal{O}$ of the mixed quantum group.} 

More precisely, let us denote by $(\overline{\Bun}_{N^-}')_{\infty\cdot x}$ the Drinfeld compactification of the algebraic stack $\Bun_{N^-}'$ with a possible pole at $x\in X$. As same as \cite[Section 4.2.2]{[FGV]}, we can define $\Whit^c((\overline{\Bun}_{N^-}')_{\infty\cdot x})$, the category of twisted Whittaker sheaves on the affine flags, as a full subcategory of the category of twisted sheaves on $(\overline{\Bun}'_{N^-})_{\infty\cdot x}$ by imposing an equivariance condition with respect to a certain unipotent groupoid against a non-degenerate character.

By a construction similar to \cite[Section 2]{[BFGM]}, we define the compact Iwahori Zastava space $\mathcal{Z}$ as an open substack of the {fiber} product of $(\overline{\Bun}'_{N^-})_{\infty\cdot x}$ and $\overline{\Bun}_B'$ over $\Bun_G'$ with the condition that the generic $N^-$-reduction and the generic $B$-reduction are transverse. It is known that the compact Iwahori Zastava space admits an ind-proper morphism to the configuration space $\Conf_x$ of coweight-colored divisors {$D$} on $X${,  such that $\langle \check{\lambda}, D\rangle$ is anti-effective away from $x$ for any dominant weight $\check{\lambda}$ of $G$}.

Consider the following diagram,
\begin{equation}\label{con dia}
    \xymatrix{
&\mathcal{Z}\ar[ld]_{\textnormal{p}_{\Whit}}\ar[rd]^v&\\
(\overline{\Bun}_{N^-})_{\infty\cdot x}'&& \Conf_x.}
\end{equation}
The $!$-pullback of $j^c_{!, \overline{\Bun}_B^{w_0}}$ to $\mathcal{Z}$ gives a kernel for a functor {$F$} from $\Whit^c((\overline{\Bun}_{N^-}')_{\infty\cdot x})$ to the category of twisted sheaves on $\Conf_x$. {Let $\Omega_q^{\Lus}$ be $F(L^0)$, here $L^0$ is the pullback of the unit Whittaker sheaf on the Drinfeld compactification of $\Bun_{N^-}$. One can check that $\Omega_q^\Lus$ is a factorization algebra, and image of $F$ acquires a factorization module structure over $\Omega_q^\Lus$, so $F$ induces a functor $F_1: \Whit^c((\overline{\Bun}_{N^-}')_{\infty\cdot x})\to \Omega_q^{\Lus}-\Fact$, here $q=\exp(\pi i c)$. The target category is the category of factorization modules on $\Conf_x$ over $\Omega_q^\Lus$, and is equivalent to $\Rep_q^\mix(\check{G})$, ref. {\cite[Theorem 1.2.1]{[CF]}}. 

%Let $\Omega_q^{\Lus}$ be $ F(L^0)$, here $L^0$ is the pullback of the unit Whittaker sheaf on the Drinfeld compactification of $\Bun_{N^-}$. 

The main theorem of \cite{[Y]} claims that $F_1$ is an equivalence. The key step in \cite{[Y]} is to show that the standard object $\Delta_\lambda$ in the Whittaker category goes to standard object in $\Omega_q^{\Lus}-\Fact$, for any weight $\lambda$ of $\check{G}$. However, it is very difficult to calculate $F_1(\Delta_\lambda)$ directly. } 

{The main theorem of this paper (Theorem \ref{Kont}) solves the above issue. Namely, } {o}ne can also construct a functor $F_2$ using the $*$-extension sheaf $j^{{-}c}_{*, \overline{\Bun}_B^{w_0}}$ as the kernel. The main theorem of this paper implies that $F_1$ and $F_2$ are Verdier dual to each other. {So, we have $F_1(\Delta_\lambda)=\BD F_2(\BD(\Delta_\lambda))$. The object $F_2(\BD(\Delta_\lambda))$ is easier to calculate, and it is shown in \cite{[Y]} that $F_2(\BD(\Delta_\lambda))$ is the unique factorization module on $\Conf_x$ over $\BD(\Omega_q^{\Lus})$ whose $!$-stalks at $\mu\cdot x\in \Conf_x$ is 1-dimensional if $\lambda=\mu$, and is $0$ otherwise. In particular, its Verdier dual is the standard object in  $\Omega_q^{\Lus}-\Fact$.} 

\section{Kontsevich compactification}
\subsection{Drinfeld compactification}
The Drinfeld compactification of the moduli stack of principal bundles is introduced in \cite{[BG]}. First of all, let us recall the definition of the Drinfeld compactification of $\Bun_B$ and $\Bun_B'$. Let $\check{\Lambda}^+$ denote the set of dominant weights of $G$.
\begin{definition}\label{2.1.1}
The algebraic stack $\overline{\Bun}_B$ classifies the data $(\mathscr{P}_T,\mathscr{P}_G, \{\kappa^{\check{\lambda}}, \check{\lambda}\in \check{\Lambda}^+\})$, where
\begin{enumerate}[label=(\alph*)]
    \item $\mathscr{P}_T$ is a $T$-bundle on $X$,
    \item $\mathscr{P}_G$ is a $G$-bundle on $X$,
    \item $\kappa^{\check{\lambda}}: \check{\lambda}(\mathscr{P}_T)\longrightarrow \mathcal{V}^{\check{\lambda}}_{\mathscr{P}_G}$ is a collection of {monomorphisms} of coherent sheaves satisfying the Pl\"{u}cker relations (see \cite[Section 1]{[BG]}).
\end{enumerate}
Here, $\check{\lambda}(\mathscr{P}_T)$ is the induced line bundle on $X$, and $\mathcal{V}^{\check{\lambda}}_{\mathscr{P}_G}$ is the vector bundle associated with $\mathscr{P}_G$ with the fiber Weyl module $\mathcal{V}_G^{\check{\lambda}}$.

 Similarly, we denote by $\overline{\Bun}_B'$ the algebraic stack which classifies the data $(\mathscr{P}_T,\mathscr{P}_G, \{\kappa^{\check{\lambda}}, \check{\lambda}\in \check{\Lambda}^+\}, \epsilon_{{x}})$. Here $\mathscr{P}_T,\mathscr{P}_G, \textnormal{and}\ \{\kappa^{\check{\lambda}}, \check{\lambda}\in \check{\Lambda}^+\}$ are as above, $\epsilon_{{x}}$ is a $B$-reduction of $\mathscr{P}_G$ at $x\in X$.
\end{definition}

We call $\overline{\Bun}_B$ (resp. $\overline{\Bun}_B'$) the Drinfeld compactification of $\Bun_B$ (resp. $\Bun_B'$). 

If we require $\kappa^{\check{\lambda}}$ to be injective {bundle maps (i.e., cokernel of each $\kappa^{\check{\lambda}}$ is $\cO_X$-flat)} in Definition \ref{2.1.1}, the resulting algebraic stack is exactly the algebraic stack  $\Bun_B$ (resp. $\Bun_B'$). The algebraic stack $\Bun_B$ (resp. $\Bun_B'$) is open dense in  $\overline{\Bun}_B$ (resp. $\overline{\Bun}_B'$).

%Note that we could right convolve any twisted D-module on $\Bun_B^{w_0,'}$ with (metaplectic) BMW D-modules.

\subsection{Stratification by degree and defect}
Let $\Lambda$ be the coweight lattice of $G$, and $\Lambda^{pos}$ be the semi-group generated by positive coroots.

For $\lambda\in \Lambda$, $\mu\in \Lambda^{pos}$, we denote by $\overline{\Bun}^{\lambda}_{B,\leq \mu}$ the open substack of $\overline{\Bun}_B$ such that the degree of $\mathscr{P}_T$ is $\lambda$ and the total defect {(ref., \cite[Proposition 1.2.5, Section 3.3]{[BG]}, \cite[7.7]{[G1]})} of the generalized $B$-structure is no more than $\mu$. We define $\overline{\Bun}^{',\lambda}_{B,\leq \mu}$ as the fiber product of $\overline{\Bun}^{\lambda}_{B,\leq \mu}$ and $\Bun_G'$ over $\Bun_G$.

%In the next section, we will construct a resolution of singularities of the intersection of $\overline{\Bun}_B^e$ and $$

\subsection{Kontsevich compactification}
For a connected projective smooth curve $X$ with genus $g$, we will construct a Kontsevich compactification $\overline{\Bun}^{K,'}_{B,x}$ of ${\Bun}_B'$ in Section \ref{sect 2.5}. This stack provides a resolution of singularities of $\overline{\Bun}_B'$. Before we define $\overline{\Bun}^{K,'}_{B,x}$, let us introduce a closely related but larger stack $\overline{\Bun}_B^{K,'}$. 
%\subsection{}First of all, let us construct a Kontsevich style compactification of ${\Bun}^{w}_{B}$.
\begin{definition}\label{def C}
Fix a point $x\in X$, we denote by $\overline{\Bun}_B^{K,'}$ the stack whose $S$-points classify the data ($C, p, \mathscr{P}_G, {\epsilon_{C}}, \epsilon_{{x}}, s$), where

\begin{enumerate}[label=(\alph*)]%[(a)]
    \item $C$ is a flat $S$-family of nodal projective curves over $X$ {of arithmetic genus $g$},
    \item $p$ is a map from $C$ to $X\times S$  
    \item $\mathscr{P}_G$ is a $G$-bundle on $X\times S$,
    \item ${\epsilon_{C}}$ is a $B$-reduction of $p^*\mathscr{P}_G$ on $C$, i.e., a subset $\mathscr{P}_B$ of $p^*\mathscr{P}_G$ which is also a $B$-bundle  on $C$,
    \item $\epsilon_{{x}}$ is a $B$-reduction of $\mathscr{P}_G$ on $x\times S$,
    \item $s$ is a section of ${C\rightarrow}S$ in the smooth locus of $C$,
\end{enumerate}

with the following conditions:

\begin{enumerate}[label=(\arabic*)]%[(1)]
    \item the map \[C\to (G/B)_{\mathscr{P}_G}\] is stable over any geometric point in $S$, i.e., the automorphism group  in the moduli space of curves with a marked point mapping to $(G/B)_{\mathscr{P}_G}$ is finite,
    \item the map $p\colon C\longrightarrow X\times S$ is of degree 1.
\end{enumerate}                                 
\end{definition}

\begin{rem} The Kontsevich compactification $\overline{\Bun}_B^K$ defined in \cite{[Camp]} classifies the data (a)-(d), with the stability condition (without a marked point) and the degree condition (2). It provides a resolution of singularities of $\overline{\Bun}_B$. In particular, its fiber product with the classifying stack $\pt/B$ over the classifying stack $\pt/G$ provides a resolution of singularities of $\overline{\Bun}_B'$. However, the author of this paper cannot construct a resolution of singularities of $\overline{\Bun}_B^w$ with $\overline{\Bun}_B^K\underset{\pt/G}{\times}{\pt/B}$.
\end{rem}
\begin{rem}
\label{11.3.6}In the definition of $\overline{\Bun}_B^{K,'}$, if we require that the map $p$ is an isomorphism, the resulting stack is isomorphic to $\Bun'_B\times X$. 
\end{rem}

The assignment sending ($C, p, \mathscr{P}_G, {\epsilon_{C}}, \epsilon_{{x}}, s$) to ($\mathscr{P}_G, \epsilon_{{x}}$) gives rise to a map from $\overline{\Bun}_B^{K,'}$ to $\Bun_G'$.
\begin{prop}\label{0.1}
$\overline{\Bun}_B^{K,'}$ is an algebraic stack locally of finite type, and is proper over $\Bun_G'$.
\end{prop}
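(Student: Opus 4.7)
The plan is to adapt Campbell's proof of the corresponding statement for $\overline{\Bun}_B^K$, splitting the task into two reductions that handle the two extra pieces of data relative to Campbell's setting: the Iwahori structure $\epsilon$ at $x$ and the marked section $s$.

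First, I would dispose of $\epsilon$ by a base change argument. Let $\overline{\Bun}_B^{K,m}$ denote the intermediate stack obtained from \lemref{def C} by forgetting $\epsilon$, i.e.\ classifying $(C,p,\mathscr P_G,\varepsilon,s)$ subject to the same stability and degree conditions. The forgetful map $\Bun_G'\to\Bun_G$ is the pullback of the $G/B$-bundle $\pt/B\to \pt/G$ along evaluation at $x$, so it is schematic, smooth, and proper. The Cartesian square
\[
\begin{tikzcd}
\overline{\Bun}_B^{K,'} \arrow[r] \arrow[d] & \overline{\Bun}_B^{K,m} \arrow[d]\\
\Bun_G' \arrow[r] & \Bun_G
\end{tikzcd}
\]
then reduces the desired algebraicity, local finite-typeness, and relative properness for $\overline{\Bun}_B^{K,'}\to\Bun_G'$ to the corresponding statements for $\overline{\Bun}_B^{K,m}\to\Bun_G$.

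Next, I would interpret $\overline{\Bun}_B^{K,m}$ as a relative Kontsevich moduli stack of stable maps with one marked point. A $B$-reduction of $p^*\mathscr P_G$ on $C$ is the same datum as a section of $(G/B)_{p^*\mathscr P_G}\to C$, and combining with $p$ identifies the data $(C,p,\varepsilon,s)$ with a one-pointed nodal curve equipped with a map to the twisted flag variety bundle $X\underset{\pt/G}{\times}\pt/B$ over $X\times\Bun_G$, whose projection to $X$ is of degree $1$. Under this dictionary, condition (1) becomes Kontsevich's stability with one marked point and condition (2) is the degree constraint. One may then either invoke the standard algebraicity, local finite-typeness, and properness of relative Kontsevich moduli of one-pointed stable maps, or run Campbell's argument verbatim in this pointed setting: boundedness (for fixed degree of $\mathscr P_G$ and bounded degree of $\varepsilon$) together with the valuative criterion via stable reduction of pointed maps.

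The main obstacle is the separatedness part of the valuative criterion: given two extensions across the puncture of a family of pointed stable maps satisfying (1) and (2), one must show they coincide. This reduces, as in Campbell's treatment, to analyzing the contraction of rational tails whose image under $C\to(G/B)_{\mathscr P_G}$ is constant. The new ingredient is to check that the marked section $s$ specializes into the smooth locus of the central fiber, which holds because $s$ factors through the open relative smooth locus over the punctured disk and the latter is open in the total space; hence the limit of $s$ at the puncture automatically lies in the smooth locus of the central fiber and does not interact with any rational tail that may be contracted.
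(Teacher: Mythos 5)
Your proposal is correct and follows essentially the same route as the paper: once an $S$-point $(\mathscr{P}_G,\epsilon)$ of $\Bun_G'$ is fixed, the remaining data $(C,p,\varepsilon,s)$ identifies the fiber with an open-and-closed locus of $\overline{\mathscr{M}_{g,1}}((G/B)_{\mathscr{P}_G})$, the moduli of one-pointed stable maps, for whose properness and local finite-typeness the paper cites Abramovich--Oort. Your preliminary base change forgetting $\epsilon$ is valid but unnecessary, and your careful discussion of the valuative criterion and specialization of the marked section is subsumed by simply invoking that properness result, as the paper does.
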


\begin{proof}
Given a $S$-point $(\mathscr{P}_G, \epsilon_{{x}})$ of $\Bun_G'$, i.e., a map $S\longrightarrow \Bun_G'$. We consider the fiber product of $\overline{\Bun}_B^{K,'}$ and $S$ over $\Bun_G'$. By definition, it is an open and closed locus of the moduli stack $\overline{\mathscr{M}_{g,1}}((G/B)_{\mathscr{P}_G}):=\{h\colon (C,s)\longrightarrow (G/B)_{\mathscr{P}_G} \}$ of stable maps defined on arithmetic genus $g$ curves with a marked point, such that the composition map $C\longrightarrow(G/B)_{\mathscr{P}_G} \longrightarrow X\times S$ has degree one. By \cite{[AO]}, $\overline{\mathscr{M}_{g,1}}((G/B)_{\mathscr{P}_G})$ is a proper Deligne-Mumford stack locally of finite type. In particular, $\overline{\Bun}_B^{K,'}$ is an algebraic stack locally of finite type and is proper over $\Bun_G'$.

%The fiber of a point $\mathscr{P}_G\in \Bun_G$ in $B^K$ is as same as the fiber of a point $(\mathscr{P}_G, \epsilon)$ in $B^{K,'}$. Hence, the first claim follows the same reason as Proposition \ref{proper bun}.

%we only need to show that $\bar{M}_{g,1}((G/B)_{\mathscr{P}_G})$ is smooth over $X$.
%Consider the map: $\bar{M}_{g,1}((G/B)_{\mathscr{P}_G})\longrightarrow \mathscr{M}_X'$. Given a point $(C, p, s)\in \mathscr{M}_X'$, its fiber in $\bar{M}_{g,1}((G/B)_{\mathscr{P}_G}$ is given by an open subset of $Maps_X(C, (G/B)_{\mathscr{P}_G})$ with stable condition. By Proposition 4.1.1 in \cite{[Camp]},$Maps(C, (G/B)_{\mathscr{P}_G})$ is smooth. Hence, $\bar{M}_{g,1}((G/B)_{\mathscr{P}_G})\longrightarrow \mathscr{M}_X'$ is smooth. What's more, $\mathscr{M}_X'\longrightarrow X$ is smooth. Hence, $\bar{M}_{g,1}((G/B)_{\mathscr{P}_G})$ is smooth over $X$.
\end{proof}

By \cite[Section 3.2]{[Camp]}, we have a map from $\overline{\Bun}_B^K$ to $\overline{\Bun}_B$. This construction naturally lifts to a map \begin{equation}\label{17.12}
   \pr\colon \overline{\Bun}_B^{K,'}\longrightarrow \overline{\Bun}_B'.
\end{equation} By the construction in \cite[Section 3.2]{[Camp]}, the composition \[\overline{\Bun}_B^{K,'}\longrightarrow \overline{\Bun}_B'\longrightarrow \Bun_G'\] coincides with the projection map from $\overline{\Bun}_B^{K,'}$ to $\Bun_G'$. 

%As a corollary of Proposition \ref{0.1}, 
\begin{cor}\label{cor 2.1} The morphism $\pr$ is proper.
\end{cor}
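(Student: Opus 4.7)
The plan is to deduce the properness of $\pr$ from the already established properness of $\overline{\Bun}_B^{K,'}$ over $\Bun_G'$ in \propref{0.1}, by using the standard cancellation property of proper morphisms: if $g\circ f$ is proper and $g$ is separated, then $f$ is proper.

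More concretely, I would first observe that by construction (see the discussion just before the statement) the projection map $\overline{\Bun}_B^{K,'}\longrightarrow \Bun_G'$ factors through $\pr$ as the composition
\[
\overline{\Bun}_B^{K,'}\xrightarrow{\ \pr\ } \overline{\Bun}_B'\longrightarrow \Bun_G',
\]
and the first two factors agree with the projection given in \propref{0.1}. The left-hand map is already known to be proper by \propref{0.1}. So the main remaining point is to argue that the second map $\overline{\Bun}_B'\to \Bun_G'$ is separated. This is classical for Drinfeld compactifications: $\overline{\Bun}_B\to\Bun_G$ is representable and in fact proper (by the work of Braverman--Gaitsgory and the valuative criterion applied to the Pl\"ucker data $\{\kappa^{\check\lambda}\}$, which extend uniquely across a discrete valuation once the $G$-bundle is fixed). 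Passing to $\overline{\Bun}_B' = \overline{\Bun}_B\times_{\Bun_G}\Bun_G'$ preserves this, so $\overline{\Bun}_B'\to\Bun_G'$ is in particular separated.

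With these two ingredients in hand, the cancellation lemma for proper morphisms of algebraic stacks applies: since $\overline{\Bun}_B^{K,'}\to\Bun_G'$ is proper and $\overline{\Bun}_B'\to\Bun_G'$ is separated, the intermediate map $\pr$ is proper. Finite-typeness of $\pr$ follows from the fact that $\overline{\Bun}_B^{K,'}$ is locally of finite type (\propref{0.1}) and the target $\overline{\Bun}_B'$ is locally of finite type.

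I do not anticipate a real obstacle here; the statement is essentially a formal consequence of \propref{0.1}. The only mild subtlety is invoking separatedness of $\overline{\Bun}_B'\to \Bun_G'$, which one could either cite from \cite{[BG]} or verify directly by the valuative criterion applied to the collection of Pl\"ucker maps $\kappa^{\check\lambda}$ together with the Iwahori datum $\epsilon$ (which is imposed at a single point and therefore does not affect the valuative criterion on the generic fiber).
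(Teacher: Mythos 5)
Your proof is correct and follows essentially the same route as the paper: both deduce properness of $\pr$ from properness of the composition $\overline{\Bun}_B^{K,'}\to\Bun_G'$ (Proposition~\ref{0.1}) together with the cancellation property, using that $\overline{\Bun}_B'\to\Bun_G'$ is proper (as the base change of $\overline{\Bun}_B\to\Bun_G$, cite \cite[Prop.\ 1.2.2]{[BG]}) hence separated. The paper states the target map is proper outright rather than isolating separatedness, but the underlying cancellation argument is identical.
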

\begin{proof}
 The composition of $\overline{\Bun}_B^{K,'}\longrightarrow \overline{\Bun}_B'$ and $\overline{\Bun}_B' \longrightarrow \Bun_G'$ is proper by the above proposition. Now the claim follows from the fact that the map $\overline{\Bun}_B' \longrightarrow \Bun_G'$ is also proper (it is a base change of the proper map $\overline{\Bun}_B\longrightarrow \Bun_G$, see \cite[Proposition 1.2.2]{[BG]}).
\end{proof}

%\subsection{}
%If $S=\mathsf{k}$, let us analyze the curve $C$ in Definition \ref{def C}.  Denote by $\overset{\circ}{X}$ the smooth locus of $X$. Because $p$ is of degree 1 and it is proper, $p$ admits a canonical section $X\longrightarrow C$. Hence, we could regard $X$ as an irreducible component of $C$. Because $C$ is of genus $g$, $C\backslash \overset{\circ}{X}$ is of genus 0 and simply connected.  Hence,  the dual graph of $C$ is a tree and each irreducible component $C_i$ of $C\backslash \overset{\circ}{X}$ is $\mathbb{P}^1$.

\subsection{Degree and defect of $\overline{\Bun}_B^{K,'}$}
In \cite[Section 3.3]{[Camp]}, the author defined degree and defect for geometric points of $\overline{\Bun}_B^K$. By repeating the definition in $loc. cit$ word-by-word, we define degree and defect of geometric points of $\overline{\Bun}_B^{K,'}$. To be self-contained, we briefly sketch the definitions.

If $S=\textnormal{Spec}\ \mathsf{k}$, since $p$ is of degree 1, $C$ has an irreducible component isomorphic to $X$ such that the restriction of $p$ to it is an isomorphism. Furthermore, because the curves $X$ and $C$ have the same genus $g$, all other components $C_1, C_2,\cdots, C_n$ of $C$ are isomorphic to $\mathbb{P}^1$, and the restriction of $p$ to any $C_i$ is constant. The dual graph of $C$ is a tree. %the curve $C$ contains an irreducible component which is isomorphic to $X$, and the restriction of $p$ to this component is an isomorphism. Except this component, all other components are isomorphic to $\mathbb{P}^1$ and the restriction of $p$ to any such component is constant. But for general $S$, $X$ does not have a canonical section in $C$.

\begin{definition}
For a geometric point $(C, p, \mathscr{P}_G, {\epsilon_{C}}, \epsilon_{{x}}, s)$ of $\overline{\Bun}_B^{K,'}$, its degree is defined as the degree of $\mathscr{P}_B$, and its defect is defined as $-\sum_{1\leq i\leq n} \deg(\mathscr{P}_B|_{C_i})$. Here $\mathscr{P}_B$ is the $B$-bundle on $C$ associated with ${\epsilon_{C}}$.
\end{definition} 
%\begin{rem} Since $p$ is constant on $C_i$, $\mathscr{P}_G$ is trivial on $C_i$. In particular, the degree of $\mathscr{P}_B$ on $C_i$ is negative, so the defect of any geometric point in $\overline{\Bun}_B^{K,'}$ is positive.
%\end{rem}
By the same proof as that of \cite[Proposition 3.3.1]{[Camp]}, we can prove that the map (\ref{17.12}) preserves degrees and defects for geometric points. 
%In other words, $\pr$ sends $B^{K,\lambda}_{\leq \mu}$ to $\overline{\Bun}_{B, \leq\mu}^{',\lambda}$.denote by $B^{K,\lambda}_{\leq \mu}$ the open substack of $B^K$ whose

For $\lambda\in \Lambda$ and $\mu\in \Lambda^{pos}$, there is a unique open algebraic substack locally of finite type of $\overline{\Bun}_B^{K,'}$, such that its geometric points consist of those geometric points $(C, p, \mathscr{P}_G, {\epsilon_{C}}, \epsilon_{{x}}, s)$ with degree $\lambda$ and defect smaller than or equal to $\mu$. Indeed, the desired algebraic stack is the fiber product of $\overline{\Bun}_{B, \leq\mu}^{',\lambda}$ and $\overline{\Bun}_B^{K,'}$ over $\overline{\Bun}_{B}^{'}$. We denote it by $\overline{\Bun}_{B, {\leq \mu}}^{K,',\lambda}$. %Furthermore, since an open substack of an algebraic stack locally of finite type is determined by its geometric points, $\overline{\Bun}_{B, {\leq \mu}}^{K,',\lambda}$ is the unique algebraic stack satisfying the requirement.

%In $loc.cit$, the author proved that the morphism $\overline{\Bun}_B^K\longrightarrow \overline{\Bun}_B$ preserves degree and defect (\cite[Proposition 3.3.1]{[Camp]}). 

\subsection{Construction of $\overline{\Bun}^{K,'}_{B,x}$}\label{sect 2.5}To construct a resolution of singularities of $\overline{\Bun}_B'$, we need to consider a closed substack of $\overline{\Bun}_B^{K,'}$ rather than itself.

Note that $\overline{\Bun}_B^{K,'}$ admits an evaluation map to $X$ which sends ($C, p, \mathscr{P}_G, {\epsilon_{C}}, \epsilon_{{x}}, s$) to $p(s)$.

\begin{definition}
For $x\in X$, we define $\overline{\Bun}^{K,'}_{B,x}$ as the fiber product of $\overline{\Bun}_B^{K,'}$ and ${x}$ over $X$.
\end{definition}
\begin{rem}
It does not matter if this fiber product is defined in the usual algebraic geometry sense or the derived algebraic geometry sense, because we will see that $\overline{\Bun}_B^{K,'}$ is actually smooth over $X$ (in this case, derived fiber product is classical). 
\end{rem}

\begin{definition}
Let $\mathscr{M}'_X$ denote the moduli stack classifying the data $(C, p, s)$, where
\begin{enumerate}[label=(\alph*)]%[(a)]
    \item $C$ is a connected nodal curve,
    \item $s$ is a point in the smooth locus of $C$,
    \item $p: C\longrightarrow X$ is a proper morphism, such that $p$ is of degree 1, and $R^1 p_*{O}_C=0$.
    \end{enumerate}
\end{definition}
\begin{rem}
The moduli stack $\mathscr{M}_X$ defined in \cite[Section 2.4]{[Camp]} classifies the data $(a)$ and $(c)$.
\end{rem}
\begin{rem}
It is not necessary to assume that $X$ is projective in the definition.
\end{rem}

%By Lemma \ref{ch}, we only need to prove that the complement of ${\Bun_B^{1}}^{',\lambda}_{\leq \mu}$ in $B_{\leq \mu}^{K,\lambda}$ is a normal crossing divisor. And the stratum $X_n$ that is given by codimension $n$ in $B_{\leq \mu}^{K,\lambda}$ is smooth over $\Bun_G'$. And the claims follow from the following proposition:

%The most important feature of $B^K$ is that it is smooth over $\mathscr{M}_X$ if we require that 

Applying the same method as \cite[Propsition 2.4.1]{[Camp]}, we have the following smoothness property of $\mathscr{M}_X'$.
\begin{prop}\label{cor 2.2}
The moduli stack $\mathscr{M}'_X$ is smooth, and $\overline{\Bun}_B^{K,'}$ is smooth over $\mathscr{M}_X'$. 
\end{prop}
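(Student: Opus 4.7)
The plan is to adapt Campbell's argument from \cite[Proposition 2.4.1]{[Camp]}, which establishes the analogous statements for $\mathscr{M}_X$ and $\overline{\Bun}_B^K\to\mathscr{M}_X$, to the marked and Iwahori-decorated setting. The crucial ingredient, as in \cite{[Camp]}, is the vanishing $R^1 p_*O_C=0$ built into the definition of $\mathscr{M}_X'$.

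For smoothness of $\mathscr{M}_X'$, I would first consider the forgetful morphism $\mathscr{M}_X'\to\mathscr{M}_X$ that drops the marked point $s$. Its $S$-points are sections of the universal nodal curve $\mathcal{C}_{\mathscr{M}_X}\to\mathscr{M}_X$ landing in the fiberwise smooth locus, so $\mathscr{M}_X'$ is identified with the open substack of the total space of the universal family on which the structural map is smooth of relative dimension one. Combined with the smoothness of $\mathscr{M}_X$ established in \cite{[Camp]}, this yields smoothness of $\mathscr{M}_X'$.

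For smoothness of $\overline{\Bun}_B^{K,'}\to\mathscr{M}_X'$, I would carry out the deformation theory at a geometric point $(C,p,s,\mathscr{P}_G,\varepsilon,\epsilon)$. Relative to deformations of $(C,p,s)$, the remaining data decomposes into three contributions: (i) the $G$-bundle $\mathscr{P}_G$ on $X$, whose obstructions live in $H^2(X,\ad\mathscr{P}_G)=0$ since $X$ is a curve; (ii) the $B$-reduction $\varepsilon$ of $p^*\mathscr{P}_G$ on $C$, whose obstructions live in $H^1\bigl(C,\ad(p^*\mathscr{P}_G)/\ad\mathscr{P}_B\bigr)$ and vanish by the Leray spectral sequence combined with $R^1p_*O_C=0$, as in \cite{[Camp]}; and (iii) the Iwahori datum $\epsilon$ at $x$, which parameterizes a point of the smooth variety $G/B$ and is therefore unobstructed. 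Assembling these, the relative tangent complex of $\overline{\Bun}_B^{K,'}$ over $\mathscr{M}_X'$ has no $H^1$ at any geometric point, which gives the desired smoothness.

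The main obstacle will be step (ii), the cohomological vanishing for the $B$-reduction. One must analyze $\ad(p^*\mathscr{P}_G)/\ad\mathscr{P}_B$ as a filtered extension of line bundles determined by the composition $C\to(G/B)_{\mathscr{P}_G}$ and the root decomposition of $\mathfrak g/\mathfrak b$. Campbell's argument shows that on the contracted rational components of $C$ these line bundles lie in a range where $H^1$ vanishes, after which the hypothesis $R^1p_*O_C=0$ propagates the vanishing to $X$. The marked point $s$, lying in the smooth locus of $C$, plays no role in this computation, so the argument transfers essentially word-for-word from \cite{[Camp]}, the only new input being that the tangent direction $T_sC$ to the marked point sits inside the already-smooth $\mathscr{M}_X'$.
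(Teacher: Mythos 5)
Your argument for the smoothness of $\mathscr{M}_X'$ is correct and takes a genuinely different route from the paper's. You identify $\mathscr{M}_X'$ with the open substack of the universal curve over $\mathscr{M}_X$ on which the structure map is smooth, so smoothness falls out of smoothness of $\mathscr{M}_X$ (from \cite{[Camp]}) plus smoothness of the universal family over that locus. The paper instead computes the tangent complex $R\Gamma(C,T(C/X)(-s))[1]$ directly, runs the long exact sequence of the triangle $T(C/X)(-s)\to T(C)(-s)\to p^*T(X)$, and checks a surjectivity on $R^1\Gamma$; your route is shorter and avoids that bookkeeping.

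The argument for smoothness of $\overline{\Bun}_B^{K,'}$ over $\mathscr{M}_X'$, however, has a genuine error at step (ii). You assert that the obstruction to deforming the $B$-reduction $\varepsilon$ (for fixed $\mathscr{P}_G$) lives in $H^1\bigl(C,\ad(p^*\mathscr{P}_G)/\ad\mathscr{P}_B\bigr)=H^1\bigl(C,(\mathfrak g/\mathfrak b)_{\mathscr{P}_B}\bigr)$ and that this group vanishes via the Leray spectral sequence and $R^1p_*\mathcal O_C=0$. That vanishing is false in general: the sheaf $(\mathfrak g/\mathfrak b)_{\mathscr{P}_B}$ is \emph{not} pulled back from $X$ (it depends on $\varepsilon$, which lives only on $C$), so the projection formula for $Rp_*$ does not apply, and on the component $X\subset C$ the group $H^1\bigl(X,(\mathfrak g/\mathfrak b)_{\mathscr{P}_B}|_X\bigr)$ is nonzero unless the degree is very antidominant. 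If this $H^1$ always vanished, the much stronger Proposition~\ref{pro l} (smoothness over $\mathscr{M}_{X,x}'\times\Bun_G''$) would hold with no hypothesis on $(\lambda,\mu)$; the whole point of Condition~\ref{X} is to force precisely this vanishing. You have transplanted the argument from the wrong proposition of \cite{[Camp]}. The correct organization (and the paper's) is: for fixed $(C,p,s)$, the fiber of $\overline{\Bun}_B^{K,'}$ is an open substack of $\Bun_G'\times_{\Maps(C,\pt/G)}\Maps(C,\pt/B)$; here $\Maps(C,\pt/B)$ is smooth because $C$ is a curve (its tangent complex $R\Gamma(C,\ad\mathscr{P}_B)[1]$ sits in degrees $-1,0$), and $\Bun_G'\to\Maps(C,\pt/G)$ is smooth because $\Bun_G'\to\Bun_G$ is a $G/B$-fibration and $\Bun_G\to\Maps(C,\pt/G)$ is smooth --- this last point is where $R^1p_*\mathcal O_C=0$ actually enters, via $Rp_*p^*\ad\mathscr{P}_G\simeq\ad\mathscr{P}_G$. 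So the fiber product is smooth over $\Maps(C,\pt/B)$, hence smooth. The $B$-reduction on $C$ is decidedly \emph{not} unobstructed once $\mathscr{P}_G$ is frozen; relative smoothness over $\mathscr{M}_X'$ holds only because one is permitted to deform $\mathscr{P}_G$ together with $\varepsilon$.
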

\begin{proof}
To prove the smoothness of $\mathscr{M}_X'$, we should show that the cohomology of its tangent complex vanishes for strictly positive degree.

Note that the tangent complex of $\mathscr{M}_X'$ at $(C,p,s)$ is given by {$R\Gamma(C,\CK[1])$, where $\CK$ fits into the distinguished triangle
\begin{equation}\label{2.1 seq}
    \CK\longrightarrow {T}(C)(-s)\longrightarrow p^*{T}(X)\overset{+1}{\longrightarrow}.
\end{equation}
} 

By a direct calculation, the cohomology $R^i\Gamma(C, {T}(C)(-s))$ is concentrated in degrees 0 and 1. Furthermore, {since $p^* T(X)$ is a line bundle,} the cohomology $R^i\Gamma(C, p^*{T}(X))$ vanishes {unless $i=0,1$}. According to the long exact sequence associated with \eqref{2.1 seq}, we have $R^i\Gamma(C, {\CK}{[1]})=0$ if $i> 1$. We only need to prove 
\[R^1\Gamma(C, {T}(C)(-s))\longrightarrow R^1\Gamma(C, p^*{T}(X))\] is surjective. It follows from the fact that the above morphism factors through $R^1\Gamma(C, {T}(C))$, and \[R^1\Gamma(C, {T}(C))\longrightarrow R^1\Gamma(C, p^*{T}(X))\] is surjective by the proof of \cite[Proposition 2.4.1]{[Camp]}.

Let us prove that $\overline{\Bun}_B^{K,'}$ is smooth over $\mathscr{M}_X'$. Given a point $(C,p,s)\in \mathscr{M}_X'$, its fiber in $\overline{\Bun}_B^{K,'}$ is the open locus of the fiber product 
\[\Bun_G'\underset{\Maps(C, \pt/G)}{\times}\Maps(C, \pt/B)\]
with the stability condition. Here $\Maps(-,-)$ denotes the mapping stack.

Since $C$ is a curve, $\Maps(C, \pt/B)$ is smooth. We only need to show that $\Bun_G'$ is smooth over $\Maps(C, \pt/G)$. Now the claim follows from the fact that the maps $\Bun_G'\longrightarrow \Bun_G$ and $\Bun_G\longrightarrow \Maps(C,\pt/G)$ are smooth (\cite[Proposition 2.4.1]{[Camp]}). 
\end{proof}
\begin{rem}
Since $\mathscr{M}_X'$ is smooth and it has a deformation along $X$, $\mathscr{M}_X'$ is smooth over $X$. Also, by the above proposition, $\overline{\Bun}_B^{K,'}$ is smooth over $X$. 
\end{rem}
\begin{prop}
The moduli stack $\mathscr{M}_X'$ is smooth over $\mathscr{M}_X$.
\end{prop}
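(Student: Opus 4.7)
The morphism $\mathscr{M}_X'\to\mathscr{M}_X$ in question is the forgetful map $(C,p,s)\mapsto (C,p)$. The plan is to realize it as the smooth locus of the universal curve over $\mathscr{M}_X$, and then invoke the standard fact that a flat family of nodal curves is smooth over the base away from the nodal locus.

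Concretely, I would let $\pi\colon \mathcal{C}_{\mathrm{univ}}\to \mathscr{M}_X$ denote the universal curve, whose $S$-points classify triples $(C,p,c)$ with $(C,p)\in\mathscr{M}_X(S)$ and $c$ an $S$-section of $C$. By construction $\pi$ is a proper flat $S$-family of nodal curves of arithmetic genus $g$ (the flatness and properness are inherited from the definition of $\mathscr{M}_X$). The locus in $\mathcal{C}_{\mathrm{univ}}$ where $c$ passes through a node of a fiber is a closed substack, and its open complement $\mathcal{C}_{\mathrm{univ}}^{sm}$ is canonically identified with $\mathscr{M}_X'$ by unraveling the defining data: a point of $\mathcal{C}_{\mathrm{univ}}^{sm}$ is precisely $(C,p,s)$ with $s$ in the smooth locus of $C$. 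Moreover under this identification the projection to $\mathscr{M}_X$ is exactly the forgetful map whose smoothness we want to prove.

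It therefore remains to check that a flat family of nodal curves, restricted to the complement of its nodal locus, is smooth over the base. This is a local assertion that reduces to the standard \'etale-local model of a node, $\mathrm{Spec}\,\mathcal{O}_S[x,y]/(xy-t)$ for some $t\in \mathcal{O}_S$, which is smooth over $\mathrm{Spec}\,\mathcal{O}_S$ after removing the vanishing locus of $x$ and $y$. Together with the fibral criterion for smoothness of a flat morphism, this yields smoothness of $\mathscr{M}_X'\to\mathscr{M}_X$ of relative dimension $1$.

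There is no substantive obstacle: the one point requiring a little care is the identification $\mathscr{M}_X' \cong \mathcal{C}_{\mathrm{univ}}^{sm}$ of stacks, which however follows immediately by matching up the defining data. (Alternatively, one could argue directly via deformation theory: given $(C,p,s)\in \mathscr{M}_X'$ and a square-zero thickening $S\hookrightarrow S'$ together with a lift of $(C,p)$ to $S'$, the section $s$ of the smooth locus of $C\to S$ extends, unobstructedly, to a section of the smooth locus of $C'\to S'$, because the smooth locus is smooth over the base by the local argument above.)
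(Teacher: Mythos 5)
Your proof is correct but takes a genuinely different route from the paper. The paper's argument is deformation-theoretic: having already shown both $\mathscr{M}_X'$ and $\mathscr{M}_X$ are smooth, it reduces smoothness of the forgetful map to surjectivity of the differential, namely $R^1\Gamma(C, T(C/X)(-s))\to R^1\Gamma(C, T(C/X))$, which it extracts from the two long exact sequences attached to the triangles $T(C/X)(-s)\to T(C)(-s)\to p^*T(X)$ and its untwisted analogue by a four-lemma diagram chase. You instead identify $\mathscr{M}_X'$ with the relative smooth locus of the universal curve over $\mathscr{M}_X$, after which smoothness of the projection is essentially immediate from flatness of the universal family and the fibral criterion (equivalently, the \'etale-local node model $\mathcal{O}_S[x,y]/(xy-t)$ away from $x=y=0$). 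Your approach is more conceptual, yields the relative dimension $1$ for free, and does not presuppose the earlier smoothness of $\mathscr{M}_X'$ --- indeed it reproves it, given that $\mathscr{M}_X$ is smooth. The paper's tangent-complex computation is a bit longer for this particular statement, but it keeps the exposition uniform with the surrounding deformation-theoretic arguments and avoids invoking the identification with the universal curve.
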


\begin{proof}
By \cite[Proposition 2.4.1]{[Camp]}, the moduli stack $\mathscr{M}_X$ is smooth. Since $\mathscr{M}_X'$ is also smooth, we only need to prove that the tangent map is surjective on $R^0\Gamma$. 

Note that the tangent complex of $\mathscr{M}_X$ at $(C,s)$ is {$R\Gamma(C, \CK'[1])$, where $\CK'\colon= T(C/X)$ and fits into the distinguished triangle 
\begin{equation}\label{2.2 seq}
    \CK'\longrightarrow {T}(C)\longrightarrow p^*{T}(X)\overset{+1}{\longrightarrow}.
\end{equation}

}

We should show that $R^1(C, {\CK})\longrightarrow R^1(C, {\CK'})$ is surjective.

By long exact sequence, we have the following commutative diagram of exact sequences
\[
\xymatrix{
R^0\Gamma(C,p^*{T}(X))\ar[r]\ar@{=}[d]&R^1\Gamma(C, \mathcal{K})\ar[r]\ar[d]&R^1\Gamma(C,{T}{(}C{)}(-s))\ar[r]\ar@{->>}[d]&R^1\Gamma(C,p^*{T}(X))\ar@{=}[d]\\
R^0\Gamma(C,p^*{T}(X))\ar[r]&R^1\Gamma(C, \mathcal{K}')\ar[r]&R^1\Gamma(C,{T}{(}C{)})\ar[r]&R^1\Gamma(C,p^*{T}(X)).}
\]
Applying the four lemma to the above diagram, we obtain the desired claim.
\end{proof}
\begin{definition}
Let $\mathscr{M}_{X,x}'$ be the fiber product 
\[\mathscr{M}_X'\underset{X}{\times}x.\]
\end{definition}
Because $\mathscr{M}_X'$ is smooth over $X$, $\mathscr{M}_{X,x}'$ is smooth.
\begin{cor}\label{cor bef 2.5}
$\overline{\Bun}^{K,'}_{B,x}$ is smooth over $\mathscr{M}_{X,x}'$. In particular, $\overline{\Bun}^{K,'}_{B,x}$ is smooth.
\end{cor}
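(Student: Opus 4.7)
The plan is to deduce the corollary directly from the two smoothness results just established, namely that $\overline{\Bun}_B^{K,'}$ is smooth over $\mathscr{M}_X'$ (Proposition 2.5-type statement above) and that $\mathscr{M}_{X,x}'$ is smooth (the assertion in the line preceding the corollary, which itself uses that $\mathscr{M}_X'$ is smooth over $X$).

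First I would observe that the fiber product defining $\overline{\Bun}_{B,x}^{K,'}$ can be rewritten by factoring the evaluation map through $\mathscr{M}_X'$. More precisely, the evaluation map $\overline{\Bun}_B^{K,'}\to X$ sending $(C,p,\mathscr{P}_G,\varepsilon,\epsilon,s)$ to $p(s)$ factors as
\[
\overline{\Bun}_B^{K,'}\longrightarrow \mathscr{M}_X'\longrightarrow X,
\]
where the first arrow forgets $\mathscr{P}_G$, $\varepsilon$, $\epsilon$ and the second sends $(C,p,s)$ to $p(s)$. Consequently, there is a canonical isomorphism
\[
\overline{\Bun}_{B,x}^{K,'}\;=\;\overline{\Bun}_B^{K,'}\underset{X}{\times}x\;\iso\;\overline{\Bun}_B^{K,'}\underset{\mathscr{M}_X'}{\times}\mathscr{M}_{X,x}'.
\]

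Next, since smoothness of morphisms is stable under base change, the projection
\[
\overline{\Bun}_{B,x}^{K,'}\longrightarrow \mathscr{M}_{X,x}'
\]
inherits smoothness from $\overline{\Bun}_B^{K,'}\to\mathscr{M}_X'$, proving the first assertion. For the ``in particular'' claim, I would simply compose: $\mathscr{M}_{X,x}'$ is smooth over $\Spec\mathsf{k}$ and $\overline{\Bun}_{B,x}^{K,'}$ is smooth over $\mathscr{M}_{X,x}'$, so their composition is smooth, hence $\overline{\Bun}_{B,x}^{K,'}$ is smooth.

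There is essentially no obstacle here beyond bookkeeping: the content is all packaged in the preceding propositions. The only point worth being slightly careful about is the remark already made in the excerpt, namely that since $\overline{\Bun}_B^{K,'}$ is smooth over $X$ (so flat over $X$), the fiber product $\overline{\Bun}_B^{K,'}\underset{X}{\times}x$ does not depend on whether it is formed in classical or derived algebraic geometry, which justifies using the set-theoretic fiber product throughout the argument.
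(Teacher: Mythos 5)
Your argument is correct and matches precisely what the paper intends (the paper leaves the proof of this corollary implicit, relying on the preceding Proposition~\ref{cor 2.2} and the observation that $\overline{\Bun}_B^{K,'}\to X$ factors through $\mathscr{M}_X'$). The factorization of the fiber product and the invocation of base-change stability of smoothness are exactly the expected content, and you correctly flag the classical/derived issue already noted in the surrounding remark.
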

\subsection{Normal crossing property}
It is proved in \cite[Proposition 4.4.1]{[Camp]} that the complement $\partial \mathscr{M}_X$ of the open dense substack $(C=X, p=\id)$ of $\mathscr{M}_X$ is a normal crossing divisor, and its preimage in $\overline{\Bun}_B^K$ is exactly the complement of $\Bun_B$. In this section, we will prove that $\mathscr{M}_{X,x}'$ enjoys similar properties. 

The moduli stack $\mathscr{M}_{X,x}'$ admits a stratification by the pattern of the dual graph of $C$, the dual graph of $C\underset{X}{\times}\{x\}$, and the place of the marked point. The open dense stratum is the point $\{C=X, p=\id, s=x\}$. Denote by $\partial \mathscr{M}_{X,x}'$ the boundary of $\{C=X, p=\id, s=x\}$ in $\mathscr{M}_{X,x}'$.

Applying the method of \cite[Proposition 4.4.1]{[Camp]}, we prove that $\partial \mathscr{M}_{X,x}'$ is a normal crossing divisor.
\begin{lem}
$\partial \mathscr{M}_{X,x}'$ is a normal crossing divisor in $\mathscr{M}_{X,x}'$.
\end{lem}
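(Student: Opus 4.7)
Following the strategy of \cite[Proposition 4.4.1]{[Camp]}, which established the normal crossing property for $\partial\mathscr{M}_X$, I plan to incorporate the marked point $s$ and the evaluation constraint $p(s) = x$ as follows.

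First, the forgetful morphism $\mathscr{M}_X' \to \mathscr{M}_X$ is smooth of relative dimension $1$, since its fibres parametrize the choice of a point in the smooth locus of the universal nodal curve. Hence the preimage of the strict normal crossing divisor $\partial\mathscr{M}_X$ is a strict normal crossing divisor $\partial\mathscr{M}_X'$ in $\mathscr{M}_X'$. Concretely, at a geometric point $(C, p, s) \in \partial\mathscr{M}_{X,x}'$ with nodes $q_1, \ldots, q_n$, this provides a local system of parameters $(t_1, \ldots, t_n, r_1, \ldots, r_m, u)$ on $\mathscr{M}_X'$ in which $t_i$ is the smoothing parameter of $q_i$, the $r_j$ are deformation parameters of $(C,p)$ preserving the dual graph, $u$ is a coordinate on the component of $C$ containing $s$, and $\partial\mathscr{M}_X'$ is locally cut out by $\{t_1\cdots t_n = 0\}$.

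Next, since $\mathscr{M}_{X,x}'$ is the fibre of the smooth evaluation map $\mathscr{M}_X' \to X$ over $x$, it is locally cut out in $\mathscr{M}_X'$ by the vanishing of a single regular function $v$ (the pullback of a local coordinate on $X$ at $x$). To conclude that $\partial\mathscr{M}_{X,x}' = \{t_1\cdots t_n = 0\}\cap\{v=0\}$ is a normal crossing divisor in $\{v=0\}$, it suffices to check the following transversality property: for every $I\subseteq\{1,\ldots,n\}$, the restriction of the evaluation map to the closed stratum $Z_I := \bigcap_{i\in I}\{t_i=0\}$ has surjective differential at $(C,p,s)$. This guarantees that $v$ can be chosen as a parameter complementary to $\{t_i\}_{i\in I}$ along $Z_I$, so that $v$ together with the $t_i$ forms part of a regular system of parameters with $v$ transverse to every intersection of boundary components.

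The transversality would be verified by cases on the location of $s$. If $s$ lies on the main component of $C$, which $p$ maps isomorphically onto $X$, then $p(s) = s$, and varying $s$ along this main component (independently of the $t_i$ and $r_j$) gives a tangent direction mapping isomorphically onto $T_x X$. If instead $s$ lies on a contracted $\mathbb{P}^1$-component, then the maximal tree of contracted components through $s$ is mapped by $p$ to a single point, which the constraint $p(s)=x$ forces to equal $x$; sliding the attaching node of this tree along $X$ is a first-order deformation that preserves the dual graph, hence lies in every $Z_I$, and maps isomorphically onto $T_x X$ under the evaluation map. The most delicate point, which I expect to be the main obstacle, is this second case: one must verify that sliding the root of the contracted tree along $X$ is an unobstructed deformation inside $Z_I$, i.e.\ that the variation keeps all smoothing parameters $t_i$ zero and is independent of the coordinate $u$. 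This should be a standard consequence of the deformation theory of nodal curves equipped with a map, where smoothing parameters and node-position parameters are independent local coordinates, and it is already implicit in the local model used in the proof of \propref{cor 2.2}.
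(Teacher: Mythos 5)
Your approach is sound in outline but takes a genuinely different route from the paper's. Both begin from the smoothness of $\mathscr{M}_X'\to\mathscr{M}_X$, which makes $\partial\mathscr{M}_X'$ a normal crossing divisor in $\mathscr{M}_X'$, but then diverge. You reduce the lemma to a transversality claim: the evaluation map $\mathscr{M}_X'\to X$ restricts to a smooth map on every closed boundary stratum $Z_I$, verified by a case analysis on where $s$ sits (slide $s$ itself if it lies on the main component, otherwise slide the attaching node of the contracted tree along $X$), and you correctly flag the second case as the delicate step and leave it as a sketch. The paper instead first \'etale-localizes on $X$: choosing an open $U\ni x$ containing all nodal images and an \'etale $f\colon U\to\mathbb{A}^1$ with $f^{-1}(0)=\{x\}$ separating those images, it reduces to $X=\mathbb{A}^1$, $x=0$. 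There, the translation action of $\mathbb{A}^1$ on itself gives a canonical isomorphism $\mathscr{M}_{\mathbb{A}^1}'\cong\mathscr{M}_{\mathbb{A}^1,0}'\times\mathbb{A}^1$ that identifies $\partial\mathscr{M}_{\mathbb{A}^1}'$ with $\partial\mathscr{M}_{\mathbb{A}^1,0}'\times\mathbb{A}^1$, and the claim drops out because $\partial\mathscr{M}_{\mathbb{A}^1}'$ is already known to be normal crossing (smooth pullback from $\mathscr{M}_{\mathbb{A}^1}$). The translation vector field on $\mathbb{A}^1$ furnishes, globally and in one stroke, exactly the stratum-preserving tangent vector you construct by hand in each case; in particular it manifestly keeps every smoothing parameter zero. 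So your route is more direct conceptually but leaves a nontrivial deformation-theoretic verification open, whereas the paper pays the one-time cost of the \'etale reduction and sidesteps that verification entirely.
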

\begin{proof}
 Note that in the definition of $\mathscr{M}_{X,x}'$, we do not need to require $X$ to be projective, the definition makes sense for any smooth connected curve with a fixed point. Now let us first reduce the proof to the case $X=\mathbb{A}^1$ and $x=0$.

Given $(C,p,s)\in \mathscr{M}_{X,x}'$, let $U$ be an open subvariety of $X$ which contains $x$ and all nodal points $x_1, \cdots, x_n$ of $C$ on $X$. The restriction gives rise to an \'{e}tale map \[\mathscr{M}_{X,x}'\longrightarrow \mathscr{M}_{U,x}'.\]

Let $f\colon U\longrightarrow \mathbb{A}^1$ be an \'{e}tale morphism such that $f^{-1}(0)=\{x\}$ and $f^{-1}(f(x_i))=\{x_i\}$ for any $x_i$. It induces a morphism \[\mathscr{M}_{\mathbb{A}^1,0}'\longrightarrow \mathscr{M}_{U,x}'\] by sending $(\widetilde{C}, \widetilde{p}, \widetilde{s})\in \mathscr{M}_{\mathbb{A}^1,0}'$ to $(\widetilde{C}\underset{\mathbb{A}^1}{\times}U, \widetilde{p}', \widetilde{s}') \in \mathscr{M}_{U,x}'$ where $\widetilde{p}'\colon \widetilde{C}\underset{\mathbb{A}^1}{\times}U\longrightarrow U\times S$ is the base change of $\widetilde{p}$, and $\widetilde{s}'$ is identified with $\widetilde{s}$ via the identification $\widetilde{C}\underset{\mathbb{A}^1}{\times}\{0\}= \widetilde{C}\underset{\mathbb{A}^1}{\times}U\underset{U}{\times}\{x\}$. 

The image of $(C,p,s)$ in $\mathscr{M}_{U,x}'$ under the morphism $\mathscr{M}_{X,x}'\longrightarrow \mathscr{M}_{U,x}'$ lifts to a point $(\widetilde{C},\widetilde{p},\widetilde{s})\in \mathscr{M}_{\mathbb{A}^1,0}$. By the assumption on $f$, the morphism  $\mathscr{M}_{\mathbb{A}^1,0}'\longrightarrow \mathscr{M}_{U,x}'$ is \'{e}tale at  $(\widetilde{C},\widetilde{p},\widetilde{s})\in \mathscr{M}_{\mathbb{A}^1,0}'$. Since the normal crossing property is \'{e}tale invariant, it suffices to prove that $\partial \mathscr{M}_{\mathbb{A}^1,0}'$ is a normal crossing divisor.

{Consider the map 
\begin{equation}
    \begin{split}
\mathscr{M}_{\mathbb{A}^1}'\longrightarrow \mathscr{M}_{\mathbb{A}^1}'\times& \mathbb{A}^1\longrightarrow \mathscr{M}_{\mathbb{A}^1}'\times \mathbb{A}^1\\
(C,p,s)\mapsto ((C,p,s),& p(s))\mapsto ((C,p-p(s),s), p(s)),
    \end{split}
\end{equation}
here $p-p(s): C\to \mathbb{A}^1$ is the composed map $C\overset{p}{\to}\BA^1\overset{a\mapsto a-p(s)}{\to} \BA^1.$
} Note that {the above map induces an isomorphism} $\mathscr{M}_{\mathbb{A}^1}'{\simeq \mathscr{M}_{\mathbb{A}^1, 0}'\times \mathbb{A}^1.}$ {W}e have to prove that the complement of $\{C=\mathbb{A}^1, p=\id\}\times \mathbb{A}^1$ in $\mathscr{M}_{\mathbb{A}^1}'$ is a normal crossing divisor. Since $\mathscr{M}_{\mathbb{A}^1}'$ is smooth over $\mathscr{M}_{\mathbb{A}^1}$ and $\partial \mathscr{M}_{\mathbb{A}^1}'$ equals the preimage of $\partial \mathscr{M}_{\mathbb{A}^1}:= \mathscr{M}_{\mathbb{A}^1}\setminus \{C=\mathbb{A}^1, p=\id\}$. Now the claim follows from the facts that smooth pullback of a normal crossing divisor is still a normal crossing divisor and $\partial \mathscr{M}_{\mathbb{A}^1}$ is a normal crossing divisor. 
\end{proof}
\begin{rem}
A stratum of $\mathscr{M}_{X,x}'$ with $q$-nodal points is of codimension $q$. For each point in this stratum, there are $q$ (not necessarily different) codimension 1 components  of $\partial \mathscr{M}_{X,x}'$ intersect transversally at this point, each component is obtained by smoothening $q-1$ nodal points out of $q$.
\end{rem}
\begin{cor}
 The boundary of $\Bun_B'$ in $\overline{\Bun}^{K,'}_{B,x}$ is a normal crossing divisor.
\end{cor}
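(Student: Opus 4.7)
The plan is to reduce the assertion to the lemma just proved, using the smooth projection $\overline{\Bun}^{K,'}_{B,x} \to \mathscr{M}'_{X,x}$ established in Corollary \ref{cor bef 2.5}. The key point is that smooth pullback preserves the normal-crossing condition, so once the boundary of $\Bun_B'$ in $\overline{\Bun}^{K,'}_{B,x}$ is identified with the preimage of $\partial\mathscr{M}'_{X,x}$, the result will follow immediately.

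First I would identify the open substack $\Bun_B' \hookrightarrow \overline{\Bun}^{K,'}_{B,x}$ scheme-theoretically. By Remark \ref{11.3.6}, the locus in $\overline{\Bun}_B^{K,'}$ where $p\colon C\to X\times S$ is an isomorphism is $\Bun_B'\times X$; intersecting with the fiber over $\{x\}\subset X$ recovers precisely $\Bun_B'$ as an open substack of $\overline{\Bun}^{K,'}_{B,x}$. On the base $\mathscr{M}'_{X,x}$, the condition ``$p$ is an isomorphism'' cuts out exactly the open point $\{C=X,\ p=\id,\ s=x\}$, whose complement is $\partial\mathscr{M}'_{X,x}$. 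Thus the complement of $\Bun_B'$ in $\overline{\Bun}^{K,'}_{B,x}$ coincides set-theoretically (indeed scheme-theoretically, since it is a Cartier pullback) with the preimage of $\partial\mathscr{M}'_{X,x}$ under the projection
\[
\overline{\Bun}^{K,'}_{B,x}\longrightarrow \mathscr{M}'_{X,x}.
\]

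Next I would invoke Corollary \ref{cor bef 2.5}, which states that this projection is smooth. Combined with the preceding lemma asserting that $\partial\mathscr{M}'_{X,x}$ is a normal crossing divisor in $\mathscr{M}'_{X,x}$, and the standard fact that the smooth pullback of a normal crossing divisor is again a normal crossing divisor, the conclusion follows. Nothing beyond these two ingredients, together with the identification of the boundary, is required.

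The only potential obstacle is the identification step: one has to be careful to verify that the equality of boundaries holds as divisors and not merely as reduced closed substacks, so that the multiplicities along each irreducible component are correctly transported under smooth pullback. This is essentially a consequence of Remark \ref{11.3.6} applied relatively over $\mathscr{M}'_{X,x}$, but it is the only point in the argument where a little care is needed. Once this is confirmed, the normal-crossing property of the boundary in $\overline{\Bun}^{K,'}_{B,x}$ is a direct transfer of the corresponding property downstairs.
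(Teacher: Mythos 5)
Your argument coincides with the paper's: both deduce the normal crossing property by identifying the complement of $\Bun_B'$ with the preimage of $\partial\mathscr{M}_{X,x}'$ under the smooth projection of Corollary \ref{cor bef 2.5}, then pulling back the normal crossing divisor from the lemma on $\mathscr{M}_{X,x}'$. The extra care you take about the scheme-theoretic identification and multiplicities is a sensible elaboration but does not change the route.
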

\begin{proof}
The claim follows from Corollary \ref{cor bef 2.5} and the observation that the preimage of $\partial \mathscr{M}_{X,x}'$ along $\overline{\Bun}^{K,'}_{B,x}\longrightarrow \mathscr{M}_{X,x}'$ is exactly the complement of $\Bun_B'$ in $\overline{\Bun}^{K,'}_{B,x}$.
\end{proof}
By Corollary \ref{cor 2.1} and Corollary \ref{cor bef 2.5}, the algebraic stack $\overline{\Bun}^{K,'}_{B,x}$ is a smooth algebraic stack and it is proper over $\overline{\Bun}'_{B}$. Furthermore, the restriction of $\pr$ to $\Bun_B'$ is an isomorphism. Hence, $\overline{\Bun}^{K,'}_{B,x}$ provides a resolution of singularities of $\overline{\Bun}_B'$ with a normal crossing boundary.
\subsection{Smoothness lemma} 
In this section, we introduce a smoothness lemma which is useful in Section \ref{3} and \ref{4}. From now on, suppose $\lambda$ is sufficiently antidominant relative to $\mu$. To be more precise, we assume $\lambda$ and $\mu$ satisfy the following condition.
\begin{cond}\label{X}
For any negative root $\check{\alpha}^-$ of $G$ {and any $0\leq \mu'\leq \mu$}, we have    $\langle\check{\alpha}^-,\lambda+{\mu'}\rangle>2g-1 $. 
\end{cond}

In the definition of $\overline{\Bun}^{K,'}_{B,x}$, we require that the section $s$ is over $x$. As a result, ${\epsilon_{C}}|_s$ the restriction of ${\epsilon_{C}}$ to $s$ gives a $B$-reduction of $\mathscr{P}_G|_x$. In particular, there is a map of prestacks \[\overline{\Bun}^{K,'}_{B,x}\longrightarrow \Bun_G'':=\Bun_G'\underset{\pt/G}{\times}\pt/B\] which sends $(C, p, \mathscr{P}_G, {\epsilon_{C}}, \epsilon_{{x}}, s)$ to $(\mathscr{P}_G, \epsilon_{{x}}, {\epsilon_{C}}|_s)$. 

Let $\overline{\Bun}^{K,',\lambda}_{B,x,\leq \mu}$ be the open substack of $\overline{\Bun}^{K,'}_{B,x}$ such that the degree is $\lambda$ and the defect is smaller than or equal to $\mu$. In other words, $\overline{\Bun}^{K,',\lambda}_{B,x,\leq \mu}$ is the intersection of $\overline{\Bun}^{K,'}_{B,x}$ and $\overline{\Bun}_{B, {\leq \mu}}^{K,',\lambda}$. The following proposition is the analog of \cite[Proposition 4.1.1]{[Camp]} in our case.
\begin{prop}\label{pro l}
If $\lambda$ and $\mu$ satisfy Condition \ref{X}, then the following map
\begin{equation}\label{l}
\begin{split}
 \overline{\Bun}^{K,',\lambda}_{B,x,\leq \mu}\qquad&\longrightarrow \mathscr{M}'_{X,x}\times \Bun_G''\\
(C, p, \mathscr{P}_G, {\epsilon_{C}}, \epsilon_{{x}}, s)&\mapsto (C, p, s), (\mathscr{P}_G, \epsilon_{{x}}, {\epsilon_{C}}|_s)
\end{split}
\end{equation}
is smooth.

%In particular, $B_{\leq \mu,x}^{K,\lambda}$ is smooth over $\mathscr{M}'_{X,x}$.
\end{prop}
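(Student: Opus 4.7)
The plan is to prove smoothness of \eqref{l} by a direct tangent-complex calculation on the nodal curve $C$, reducing the claim to a first-cohomology vanishing enforced by Condition~\ref{X}. Both the source and the target are smooth algebraic stacks locally of finite type: the source by Corollary~\ref{cor bef 2.5}, and the target because $\mathscr{M}'_{X,x}$ is smooth by Proposition~\ref{cor 2.2} while $\Bun_G''$ is smooth over $\Bun_G'$ (the extra $B$-reduction at $x$ forms a $G/B$-torsor). It therefore suffices to show that at each geometric point the relative tangent complex has vanishing first cohomology.

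Fix a geometric point $(C, p, \mathscr{P}_G, \varepsilon, \epsilon, s)$ of the source and write $\mathscr{P}_B$ for the $B$-bundle on $C$ corresponding to $\varepsilon$. The fiber of \eqref{l} through this point is an open substack, cut out by stability and by the degree/defect bounds, of the moduli of sections $\sigma$ of the relative homogeneous bundle $(G/B)_{p^*\mathscr{P}_G}\to C$ satisfying $\sigma(s)=\varepsilon|_s$. The tangent complex of this space of sections at $\sigma=\varepsilon$ is
\[
R\Gamma\bigl(C,\,(\fg/\fb)_{\mathscr{P}_B}(-s)\bigr),
\]
where $(\fg/\fb)_{\mathscr{P}_B}$ is the vector bundle attached to $\mathscr{P}_B$ via the adjoint $B$-action on $\fg/\fb\simeq\fn^-$, and the twist $(-s)$ encodes the pointwise constraint at $s$. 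Filtering $\fg/\fb$ by $B$-subrepresentations with one-dimensional graded quotients indexed by the negative roots $\check\alpha^-$, the required vanishing reduces to
\[
H^1\bigl(C,\,L_{\check\alpha^-}(-s)\bigr)=0 \quad \text{for every negative root } \check\alpha^-,
\]
where $L_{\check\alpha^-}=\check\alpha^-(\mathscr{P}_T)$ denotes the line bundle on $C$ induced from the $T$-bundle $\mathscr{P}_T$ associated with $\mathscr{P}_B$.

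Decomposing $C=X\cup C_1\cup\cdots\cup C_n$ with $X$ the main component of genus $g$ and each $C_i\simeq\PP^1$ a rational tail, the normalization short exact sequence reduces the vanishing to each component separately. On the main component $X$, the antidominance provided by Condition~\ref{X} combined with the degree/defect matching of \cite[Proposition~3.3.1]{[Camp]} yields $\deg L_{\check\alpha^-}|_X > 2g-1$; after the $(-s)$-twist the degree stays above $2g-2$, so $H^1(X,\cdot)=0$. On a rational tail $C_i$, the map $p|_{C_i}$ is constant, so $\sigma|_{C_i}$ is a genuine stable map $\PP^1\to G/B$, and Kontsevich stability yields sufficient positivity of $L_{\check\alpha^-}|_{C_i}$ to kill $H^1(C_i,\cdot)$ even after the $(-s)$-twist. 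The main obstacle is extracting these precise non-negativity bounds on the rational tails from the stability condition; this is handled in \cite[Proposition~4.1.1]{[Camp]} in the Drinfeld (non-Iwahori) setting, and the argument transfers here verbatim, the only modification being the additional twist by $O_C(-s)$, which accounts for the tightening of the antidominance hypothesis from $>2g-2$ in \emph{loc.~cit.} to $>2g-1$ here in Condition~\ref{X}.
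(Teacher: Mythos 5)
Your proof follows essentially the same route as the paper's: identify the fiber of \eqref{l} as a mapping space with a pointwise constraint at $s$, compute its tangent complex, filter $\fg/\fb$ by negative-root line bundles, and invoke the degree/defect bounds on the main component $X$ together with positivity on the rational tails. Your reformulation replaces the paper's criterion (surjectivity of $H^0(C,(\fg/\fb)_{\mathscr{P}_B})\to(\fg/\fb)_{\mathscr{P}_B}|_s$, combined with Campbell's $H^1$-vanishing for the untwisted $\Maps$-space) by the single condition $H^1\bigl(C,(\fg/\fb)_{\mathscr{P}_B}(-s)\bigr)=0$; these are equivalent, and your phrasing is arguably cleaner.

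One step is stated imprecisely and deserves a flag: ``the normalization short exact sequence reduces the vanishing to each component separately'' is not literally true. For a vector bundle $\cE$ on a nodal curve $C$ with normalization $\nu\colon\widetilde C\to C$, the sequence $0\to\cE\to\nu_*\nu^*\cE\to\cQ\to 0$ (with $\cQ$ a skyscraper at the nodes) shows that $H^1(C,\cE)=0$ requires \emph{both} $H^1(\widetilde C,\nu^*\cE)=0$ \emph{and} the surjectivity of the evaluation map $H^0(\widetilde C,\nu^*\cE)\to H^0(\cQ)$. The second condition is the gluing obstruction at the nodes, and it is precisely what the paper's induction on components (the leaf-pruning argument using the exact sequence $0\to(\fg/\fb)_{\mathscr{P}_B}|_{C_0}(-c)\to(\fg/\fb)_{\mathscr{P}_B}\to(\fg/\fb)_{\mathscr{P}_B}|_{C'}\to 0$) is designed to handle; global generation on each component is exactly the hypothesis that makes the pruning step go through. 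You do cite Campbell's Proposition 4.1.1 as ``transferring verbatim,'' which amounts to invoking this induction, so the argument is salvageable as written, but stated on its own the normalization reduction is a gap. A second, smaller point: you only treat the twist $O_C(-s)$ as hitting the main component $X$; the case where $s$ lies on a rational tail (leading to a $-1$ twist on a $\PP^1$ and requiring a different pruning order) is handled as a separate case in the paper and should at least be acknowledged.
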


\begin{proof}
Given a point $((C, p, s), (\mathscr{P}_G, \epsilon_{{x}}, \epsilon_{{x}}'))$ in $\mathscr{M}_{X,x}'\times \Bun_G''$, its preimage in $\overline{\Bun}^{K,',\lambda}_{B,x,\leq \mu}$ is the open locus of \[\Maps_X(C, (G/B)_{\mathscr{P}_G})\mathop{\times}\limits_{\Maps_X(s, (G/B)_{\mathscr{P}_G})} \pt\] with the stability condition and the degree-defect condition. Here $\pt$ is the point scheme, and the map $\pt\longrightarrow \Maps_X(s, (G/B)_{\mathscr{P}_G})$ is given by ${\epsilon_{C}}|_s$.

We should prove that  $\Maps_X(C, (G/B)_{\mathscr{P}_G})$ is smooth over $\Maps_X(s, (G/B)_{\mathscr{P}_G})$ at any point in $\Maps_X(C, (G/B)_{\mathscr{P}_G})$ with the stability condition and the degree-defect condition. Since $\Maps_X(C, (G/B)_{\mathscr{P}_G})$ and $\Maps_X(s, (G/B)_{\mathscr{P}_G})\simeq (G/B)_{\mathscr{P}_G|_x}$ are smooth (ref. \cite[Proposition 4.1.1]{[Camp]}), we only need to prove that the tangent map \[{T}\Maps_X(C, (G/B)_{\mathscr{P}_G})\longrightarrow {T}(G/B)_{\mathscr{P}_G|_x}\] induces a surjective map on $R^0\Gamma$.

Indeed, given a point in $\Maps_X(C, (G/B)_{\mathscr{P}_G})$, i.e., a morphism $f: C\longrightarrow (G/B)_{\mathscr{P}_G}$. The tangent complex ${T}_{f}\Maps_X(C, (G/B)_{\mathscr{P}_G})$ is given by
\begin{equation}
    R\Gamma(C, f^*{T}((G/B)_{\mathscr{P}_G}/X)).
\end{equation}
Due to the isomorphism:
$f^*{T}((G/B)_{\mathscr{P}_G}/X)\simeq  (\mathfrak{g}/\mathfrak{b})_{\mathscr{P}_B}$, we have
\begin{equation}
    {T}_{f}\Maps_X(C, (G/B)_{\mathscr{P}_G})=R\Gamma(C, (\mathfrak{g}/\mathfrak{b})_{\mathscr{P}_B}).
    \end{equation}
Similarly, the tangent complex of ${T}(G/B)_{\mathscr{P}_G|_x}$ at ${\epsilon_{C}}|_s$ is given by $(\mathfrak{g}/\mathfrak{b})_{\mathscr{P}_B|_s}$. Here $\mathscr{P}_B$ is the $B$-bundle on $C$ associated with ${\epsilon_{C}}$.

We only need to prove the map of restriction of global sections to $s$ 
\begin{equation}\label{2.4}
    R^0\Gamma(C, (\mathfrak{g}/\mathfrak{b})_{\mathscr{P}_B})\longrightarrow (\mathfrak{g}/\mathfrak{b})_{\mathscr{P}_B|_s}
\end{equation}
is surjective. 

First of all, we claim that $(\mathfrak{g}/\mathfrak{b})_{\mathscr{P}_B}$ is globally generated on any irreducible component of $C$. 

If we denote by $\lambda_X$  the degree of $\mathscr{P}_B$ restricted to $X$, we have {$0\leq \lambda_X-\lambda\leq \mu$.} By Condition \ref{X}, 
$\langle\lambda_X,\check{\alpha}^-\rangle{=\langle\lambda+(\lambda_X-\lambda),\check{\alpha}^-\rangle}>2g-1$. In particular, $\check{\alpha}^-(\mathscr{P}_T)|_{X}$ is globally generated. Here $\mathscr{P}_T$ denotes the $T$-bundle induced from $\mathscr{P}_B$. Furthermore,  $(\mathfrak{g}/\mathfrak{b})_{\mathscr{P}_B}$ has a filtration such that the associated graded vector bundle is $\bigoplus_{\check{\alpha}^-\in \check{\Delta}^-} \check{\alpha}^-(\mathscr{P}_T)$. Hence, $(\mathfrak{g}/\mathfrak{b})_{\mathscr{P}_B}|_X$ is globally generated. On any irreducible component $C_i$ except $X$, because the degree of $\mathscr{P}_B|_{C_i}$ is negative, $\check{\alpha}^-(\mathscr{P}_T)$ is globally generated on $C_i\simeq \mathbb{P}^1$. It follows that $(\mathfrak{g}/\mathfrak{b})_{\mathscr{P}_B|_{C_i}}$ is globally generated. 

Now we induct on the number of irreducible components of $C$ to prove the surjectivity of \eqref{2.4}. If $n=0$, i.e., $C=X$, we have already proved \eqref{2.4} is surjective by the globally generated property.

Assume we proved the surjectivity for $n-1$, we choose an irreducible component $C_0$ in $C$ which contains a single nodal point $c$. Let $C'= C\setminus (C_0\setminus\{c\})$.

If $s\in C'$, we consider the following short exact sequence:
\[0\longrightarrow (\mathfrak{g}/\mathfrak{b})_{\mathscr{P}_B}|_{C_0(-c)}\longrightarrow (\mathfrak{g}/\mathfrak{b})_{\mathscr{P}_B}\longrightarrow (\mathfrak{g}/\mathfrak{b})_{\mathscr{P}_B}|_{C'}\longrightarrow 0.\]
Due to  $(\mathfrak{g}/\mathfrak{b})_{\mathscr{P}_B|_{C_0}}$ is globally generated, $H^1(C_0,(\mathfrak{g}/\mathfrak{b})_{\mathscr{P}_B}|_{C_0(-c)})=0$. So the map \[H^0(C,(\mathfrak{g}/\mathfrak{b})_{\mathscr{P}_B})\longrightarrow H^0(C',(\mathfrak{g}/\mathfrak{b})_{\mathscr{P}_B}|_{C'})\] is surjective. According to the inductive hypothesis: the claim is true for $C'$, i.e, \[H^0(C',(\mathfrak{g}/\mathfrak{b})_{\mathscr{P}_B}|_{C'})\longrightarrow (\mathfrak{g}/\mathfrak{b})_{\mathscr{P}_B}|_s\] is surjective. Now the claim follows from the observation that the morphism $H^0(C,(\mathfrak{g}/\mathfrak{b})_{\mathscr{P}_B})\longrightarrow (\mathfrak{g}/\mathfrak{b})_{\mathscr{P}_B}|_s$ factors through $H^0(C',(\mathfrak{g}/\mathfrak{b})_{\mathscr{P}_B}|_{C'})$. 

If $s$ is contained in $C_0$. Applying the above argument to other irreducible components of $C$ except $C_0$, one can show that the map \[H^0(C,(\mathfrak{g}/\mathfrak{b})_{\mathscr{P}_B})\longrightarrow H^0(C_0,(\mathfrak{g}/\mathfrak{b})_{\mathscr{P}_B}|_{C_0})\] is surjective. Now, the claim follows from the globally generated property of $(\mathfrak{g}/\mathfrak{b})_{\mathscr{P}_B}|_{C_0}$ and $H^0(C,(\mathfrak{g}/\mathfrak{b})_{\mathscr{P}_B}|_{C})\longrightarrow (\mathfrak{g}/\mathfrak{b})_{\mathscr{P}_B}|_s$ factors through $H^0(C_0,(\mathfrak{g}/\mathfrak{b})_{\mathscr{P}_B}|_{C_0})$.
\end{proof}
\section{Kontsevich compactification of $\Bun_B^w$}\label{3}
In the last section, we have already constructed a Kontsevich compactification of $\Bun_B'$. An important feature of $\overline{\Bun}^{K,'}_{B,x}$ is that it admits a map to $B\backslash G/B$ and we can use it to construct a resolution of singularities of $\overline{\Bun}_B^w$, for any Weyl group element $w$. We start by introducing a stratification of $\Bun_B'$.
\subsection{Relative position}\label{rela}
%Let $\Lambda$ be the coweght lattice of $G$, and $\Lambda^{pos}$ be the lattice spanned by positive coroots.

$\Bun_B'$ admits a stratification indexed by the Weyl group. Note that $\Bun_B'$ admits a \textit{relative position} map
\begin{equation}
\begin{split}
      \textnormal{rp}: \Bun_B'&\to \pt/B\mathop{\times}\limits_{\pt/G}\pt/B\simeq  B\backslash G/B.
\end{split}
\end{equation}
 For any Weyl group element $w$, the fiber product of $\Bun_B'$ and the Bruhat cell $\Br^w\subset B\backslash G/B$ corresponding to $w$ over $B\backslash G/B$ is an algebraic stack, and we denote it by $\Bun_B^w$. Denote {by} $\overline{\Bun}_B^w$ the closure of $\Bun_B^w$ in $\overline{\Bun}_B'$.

 Consider the assignment which sends ($C, p, \mathscr{P}_G, {\epsilon_{C}}, \epsilon_{{x}}, s$) to $(\mathscr{P}_G, \epsilon_{{x}}, {\epsilon_{C}}|_s)$. It gives rise to a relative position map
 
 \begin{equation}\label{eq 3.2}
     \overline{\Bun}^{K,'}_{B,x}\longrightarrow \Bun_G'\underset{\pt/G}{\times}\pt/B\longrightarrow B\backslash G/B.
 \end{equation}
It is compatible with $\textnormal{rp}$ when restricted to $\Bun_B'$.

By Corollary \ref{cor 2.1}, the fiber product of $\overline{\Bun}^{K,'}_{B,x}$ and the Schubert {variety} $\Br^{\leq w}$ (i.e., closure of $\Br^{w}$) over $B\backslash G/B$ has a proper projection to {$\overline{\Bun}_B'$}, and the restriction of the projection is an isomorphism on ${\Bun}_B^w$.

However, in general, the fiber product of $\overline{\Bun}^{K,'}_{B,x}$ and the Schubert {variety} $\Br^{\leq w}$ (i.e., closure of $\Br^{w}$) over $B\backslash G/B$ is not a resolution of singularities of $\overline{\Bun}_B^w$. If we do not make any assumption on degree and defect, then {the author does not know} if the map from this fiber product to $\overline{\Bun}{}'_B$ factors through $\overline{\Bun}_B^w$. Also, the fiber product is not necessarily smooth, and the boundary of the isomorphism locus is not a normal crossing divisor. These barriers will lead to a difficulty of studying D-modules and $\ell$-adic sheaves on $\overline{\Bun}_B^w$.

%By taking the fiber product of $\overline{\Bun}^{K,'}_{B,x}$ and the Bruhat cell $\Br^w$ over $B\backslash G/B$, we obtain a locally closed algebraic substack $\overline{\Bun}^{K,',w}_{B,x}$ of $\overline{\Bun}^{K,'}_{B,x}$. We will denote by $\overline{\Bun}^{K,',\leq w}_{B,x}$ the closed substack of $\overline{\Bun}_B^{K,'}$ such that the relative position of $\epsilon$ and $\varepsilon|_s$ is smaller than or equal to $w$ with respect to the Bruhat order of the Weyl group. In other words, 
%\begin{equation}
%    \overline{\Bun}^{K,',\leq w}_{B,x}:= \overline{\Bun}^{K,'}_{B,x}\underset{B\backslash G/B}{\times} \Br^{\leq w}.
%\end{equation}

%By Corollary \ref{cor 2.1}, the algebraic stack $\overline{\Bun}^{K,',\leq w}_{B,x}$ is proper over $\overline{\Bun}_B'$. Furthermore, the restriction of the projection $\overline{\Bun}^{K,',\leq w}_{B,x}\longrightarrow \overline{\Bun}_B'$ to $\Bun_B^w$ is an isomorphism. $B^{K,\leq w}_x$ is very close to a resolution of singularities of $\overline{\Bun}_B^w$. However, $\overline{\Bun}^{K,',\leq w}_{B,x}$ is not very practical. The reason is that the boundary of the isomorphism locus is not a normal crossing divisor and $\overline{\Bun}^{K,',\leq w}_{B,x}$ is not necessarily smooth. These barriers will lead to a difficulty of studying D-modules and $\ell$-adic sheaves on it.

The solution to solve these problems is to use the Bott–Samelson resolution (or, Hansen resolution, Demazure resolution).

Let us briefly recall its definition.
\subsection{Bott–Samelson resolution}
For $w\in W$, $w$ has a reduced expression $w= s_{i_1}s_{i_2}\cdots s_{i_l}$. We denote by $P_{i_k}$ the minimal parabolic subgroup of $G$ corresponding to $s_{i_k}$. The Bott–Samelson resolution $Z_w$ is defined to be {$P_{i_1}\overset{B}{\times} P_{i_2}\overset{B}{\times}  \cdots \overset{B}{\times}  P_{i_l}/ B$.} 

The multiplication gives rise to a map from $Z_w$ to the Schubert variety with respect to $w$. It provides a resolution of singularities of {the closure of} $BwB/B$ {in $G/B$}.

Furthermore, we notice that this map is left $B$-invariant. In particular, it induces a map of stacky quotients \[\tilde{Z}_w\longrightarrow {\Br}^{\leq w}.\] Here, $\tilde{Z}_w$ is the quotient of $Z_w$ by $B$.

A key feature of the Bott–Samelson resolution is that the boundary of the isomorphism locus ${\Br}^w$ is a normal crossing divisor.
\subsection{Construction of a resolution of singularities of $\overline{\Bun}_B^w$}
\begin{definition}\label{def 3.3.1}
For $w\in W$, we denote by $\BS^w$ the fiber product of $\overline{\Bun}^{K,'}_{B,x}$ and $\tilde{Z}_w$ over $B\backslash G/B$. 

Similarly, we define $\BS^{w, \lambda}_{\leq \mu}$ as the fiber product of $\overline{\Bun}^{K,',\lambda}_{B,x,\leq \mu}$ and $\tilde{Z}_w$ over $B\backslash G/B$. %Here, $\overline{\Bun}^{K,',\lambda,\leq w}_{B,x,\leq \mu}$ is the intersection of $\overline{\Bun}^{K,',\leq w}_{B,x}$ and $\overline{\Bun}^{K,',\lambda}_{B,x,\leq \mu}$.
\end{definition}
We claim that $\BS^{w, \lambda}_{\leq \mu}$ provides a resolution of singularities of $\overline{\Bun}_{B,\leq \mu}^{w,\lambda}$ if the degree is very anti-dominant with respect to the defect.
%If $\lambda$ and $\mu$ satisfy Condition \ref{X}, 

\begin{prop}\label{imp}\
\begin{enumerate}[label={\upshape(\arabic*)}]
    \item $\BS^{w,\lambda}_{\leq \mu}$ is a smooth stack.
    \item The map $\pr$ induces a map \[\BS^{w,\lambda}_{\leq \mu}\longrightarrow \overline{\Bun}^{w,\lambda}_{B, \leq \mu}.\]Its restriction to $\Bun_{B}^{w,\lambda}$ is an isomorphism.
    \item The above map is proper. 
    \item The complement of $\Bun_{B}^{w,\lambda}$ in $\BS_{\leq \mu}^{w, \lambda}$ is a normal crossing divisor.
\end{enumerate}
\end{prop}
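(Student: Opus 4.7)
My strategy is to factor every claim through the smooth morphism of Proposition~\ref{pro l} combined with the known properties of the two factors $\mathscr{M}'_{X,x}$ and $\tilde Z_w$. The key geometric input is that the relative position map \eqref{eq 3.2} factors through $\Bun_G''$, and that $\Bun_G''\to B\backslash G/B$ is obtained by base change from the smooth restriction map $\Bun_G\to\pt/G$ along the smooth map $B\backslash G/B\to\pt/G$; in particular $\Bun_G''\to B\backslash G/B$ is smooth. Since $\tilde Z_w$ is smooth, so is $\Bun_G''\underset{B\backslash G/B}{\times}\tilde Z_w$, and rewriting
\[
\BS^{w,\lambda}_{\leq\mu} \;=\; \overline{\Bun}^{K,',\lambda}_{B,x,\leq\mu} \underset{\mathscr{M}'_{X,x}\times \Bun_G''}{\times} \Bigl(\mathscr{M}'_{X,x}\times\bigl(\Bun_G''\underset{B\backslash G/B}{\times}\tilde Z_w\bigr)\Bigr),
\]
Proposition~\ref{pro l} exhibits the projection of $\BS^{w,\lambda}_{\leq\mu}$ to this smooth target as a base change of a smooth morphism, hence smooth. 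This proves part~(1).

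Part~(3) is formal: $\tilde Z_w\to B\backslash G/B$ is proper, so its base change $\BS^{w,\lambda}_{\leq\mu}\to \overline{\Bun}^{K,',\lambda}_{B,x,\leq\mu}$ is proper, and composing with the proper map $\pr$ from Corollary~\ref{cor 2.1} yields part~(3).

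For part~(4), let $U$ be the product of the open stratum $\{C=X,p=\id,s=x\}$ of $\mathscr{M}'_{X,x}$ with the preimage of the open Bruhat cell $\Br^w$ in $\Bun_G''\underset{B\backslash G/B}{\times}\tilde Z_w$. The complement of $U$ decomposes as the pullback of $\partial\mathscr{M}'_{X,x}$ together with the pullback of the boundary of $\Br^w$ in $\tilde Z_w$: the former is normal crossing by the lemma of Section~2.6, the latter is the classical Bott--Samelson normal crossing boundary, and since they live on different factors of the smooth product they meet transversally; hence the complement of $U$ in the ambient smooth stack is a normal crossing divisor. Its preimage in $\BS^{w,\lambda}_{\leq\mu}$ under the smooth map of the first paragraph is the complement of $\Bun_B^{w,\lambda}$, and smooth pullback preserves the normal crossing property, giving part~(4).

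For part~(2), over $\Bun_B^{w,\lambda}\subset\Bun_B'$ the relative position lies in $\Br^w$, where $\tilde Z_w\to\Br^{\leq w}$ is an isomorphism, and $\pr$ is an isomorphism on $\Bun_B'$; hence the composite $\BS^{w,\lambda}_{\leq\mu}\to\overline{\Bun}'_B$ restricts to an isomorphism on $\Bun_B^{w,\lambda}$. By part~(4), $\Bun_B^{w,\lambda}$ is open dense in $\BS^{w,\lambda}_{\leq\mu}$, and by part~(3) the map is proper, so the whole image equals the closure of $\Bun_B^{w,\lambda}$ in $\overline{\Bun}^{',\lambda}_{B,\leq\mu}$, which is $\overline{\Bun}^{w,\lambda}_{B,\leq\mu}$ by definition; this yields the required factorization. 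The main obstacle is precisely this last step: without Condition~\ref{X}, a component of $\BS^{w,\lambda}_{\leq\mu}$ carrying nontrivial defect or nodal degenerations could a priori map outside $\overline{\Bun}_B^w$ into a stratum $\overline{\Bun}_B^{w'}$ with $w'\not\leq w$, and it is exactly the antidominance hypothesis that, via Proposition~\ref{pro l} and the density statement of part~(4), confines the image to $\overline{\Bun}_B^w$.
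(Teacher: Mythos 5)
Your proof is correct and follows essentially the same strategy as the paper's: parts (1) and (4) are reduced to the smooth morphism of Proposition~\ref{pro l} via base change and smooth pullback of a normal crossing divisor, and part (3) combines properness of $\tilde Z_w\to B\backslash G/B$ with Corollary~\ref{cor 2.1}. The only deviation is in part (2), where you invoke part (4) for density and add a properness argument to establish surjectivity onto $\overline{\Bun}^{w,\lambda}_{B,\leq\mu}$; the paper obtains the required factorization directly from density alone (the target is closed in $\overline{\Bun}^{',\lambda}_{B,\leq\mu}$ and the source is reduced), without needing surjectivity, but your reordering introduces no circularity and is harmless.
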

%\begin{cor}
%If $\lambda$ and $\mu$ satisfy Condition \ref{X}, \[\BS^{w,\lambda}_{\leq \mu}\longrightarrow \mathscr{M}_{X,x}'\times \Bun_G''\underset{B\backslash G/B}{\times} \widetilde{Z}_w\]
%is smooth.
%\end{cor}
%\begin{proof}
%It follows from Proposition \ref{pro l} by base change.
%\end{proof}
\begin{proof}
(1). By Proposition \ref{pro l}, $\overline{\Bun}^{K,',\lambda}_{B,x,\leq \mu}$ is smooth over $\mathscr{M}_{X,x}'\times \Bun_G''$. In particular, it is smooth over $\Bun_G''\simeq \Bun_G\underset{\pt/G}{\times} B\backslash G/B$. Note that $\Bun_G$ is smooth over the classifying stack of $G$, we obtain that $\overline{\Bun}^{K,',\lambda}_{B,x,\leq \mu}$ is smooth over $B\backslash G/B$.  By base change, $\BS^{w, \lambda}_{\leq \mu}$ is smooth over $\tilde{Z}_w$. Now, the smoothness of $\BS^{w, \lambda}_{\leq \mu}$ follows from the fact that $\tilde{Z}_w$ is smooth.%Since \[\overline{\Bun}^{K,',\lambda,\leq w}_{B,x,\leq \mu}= \overline{\Bun}^{K,',\lambda}_{B,x,\leq \mu}\underset{B\backslash G/B}{\times}\Br^{\leq w},\] $\overline{\Bun}^{K,',\lambda,\leq w}_{B,x,\leq \mu}$ is smooth over $\Br^{\leq w}$.

(2). By definition, the map $ \overline{\Bun}_{B,x}^{K,'}\longrightarrow \overline{\Bun}_B'$ of \eqref{17.12} preserves degree and defect, and is an isomorphism on $\Bun_B'$. So there is a map 
\begin{equation}\label{3.1 eq}
    \BS^{w,\lambda}_{\leq \mu}= \overline{\Bun}^{K,',\lambda}_{B,x,\leq \mu}\underset{B\backslash G/B}{\times}\tilde{Z}_w\longrightarrow \overline{\Bun}^{K,',\lambda}_{B,x,\leq \mu}\longrightarrow \overline{\Bun}^{',\lambda}_{B,\leq\mu}
\end{equation}
which is an isomorphism on $\Bun_{B}^{w,\lambda}$. Note that $\Bun_{B}^{w,\lambda}$ is open dense in $\BS^{w,\lambda}_{\leq \mu}$, so the above map factors through $\overline{\Bun}^{w,\lambda}_{B,\leq\mu}$.

%We only need to show that  It follows from the fact that the restriction of the above map to the open dense substack $\Bun_{B}^{w,\lambda}= \Bun_B^{w,\lambda}\underset{{\Br}^w}{\times} \Br^w$ of $\BS^{w,\lambda}_{\leq\mu}$ is an isomorphism. 
(3). We only need to show that the map \eqref{3.1 eq} is proper. It follows from the properness of the map $\tilde{Z}_w\longrightarrow \Br^{\leq w}$ and Corollary \ref{cor 2.1}.

(4). To prove the complement of $\Bun_B^{w, \lambda}$ in $\BS^{w, \lambda}_{\leq \mu}$ is a normal crossing divisor, we need to notice that this complement is the preimage of a normal crossing divisor \[\partial (\mathscr{M}_{X,x}'\times  \tilde{Z}_w)=(\mathscr{M}_{X,x}\times \tilde{Z}_w)\setminus ((C=X, p=\id, s=x)\times \Br^w))\] under the map
\begin{equation}\label{ncd bs}
    \BS^{w,\lambda}_{\leq \mu}\longrightarrow \mathscr{M}_{X,x}'\times \tilde{Z}_w.
\end{equation}
The above map is the base change of 
\begin{equation}\label{3.3}
    \overline{\Bun}^{K,',\lambda}_{B,x,\leq \mu}\longrightarrow \mathscr{M}_{X,x}'\times \Bun_G''\longrightarrow \mathscr{M}_{X,x}'\times B\backslash G/B.
\end{equation}
Since the map \eqref{3.3} is smooth, \eqref{ncd bs} is smooth.
As a result, the complement of $\Bun_B^{w,\lambda}$ in $\BS^{w, \lambda}_{\leq \mu}$ is a normal crossing divisor as it is the preimage of a normal crossing divisor under a smooth map.
\end{proof}
%\begin{rem}
%In fact, by base change, the map \[\BS^{w, \lambda}_{\leq \mu}\longrightarrow \mathscr{M}_{X,x}'\times \Bun_G'\underset{\pt/B}{\times} \tilde{Z}_w\] is smooth. Hence, the pullback of 
%\end{rem}

\section{universally locally acyclicity of extension sheaves}\label{4}

As an application of the construction of $\BS^w$ in the last section, we will prove the universally locally acyclicity of the $!$-extension sheaf on $\overline{\Bun}_{B,\leq \mu}^{w,\lambda}$. The same claim holds for $*$-extension sheaf and the intersection cohomology sheaf with similar proofs. We will prove the universally locally acyclicity in the \textit{twisted} D-modules and $\ell$-adic sheaves setting. This generality is useful for the quantum Langlands program.

First of all, let us recall the meaning of the twisting here.
\subsection{Twisted setting}
Let $\mathscr{L}_{\Bun_T}$ be {the} {following} line bundle on $\Bun_T$, {its} fiber over a point $\mathscr{P}_T\in \Bun_T$ is given by $$\textnormal{det}R\Gamma(X, {O}_X)^{\dim (\mathfrak{t})}\otimes \bigotimes_{\check{\alpha}\in \check{\Delta}}\textnormal{det} R\Gamma(X, \check{\alpha}(\mathscr{P}_T)).$$
{In addition,} let $\mathscr{L}_{\Bun_G}$ be the determinant line bundle on $\Bun_G$, the fiber of $\mathscr{L}_{\Bun_G}$ over $\mathscr{P}_G$ is given by \[\textnormal{det}\ R\Gamma(X, \mathfrak{g}_{\mathscr{P}_G}),\] where $\mathfrak{g}_{\mathscr{P}_G}$ denotes the vector bundle associated with $\mathscr{P}_G$ and $\mathfrak{g}$.

$\overline{\Bun}_B$ admits natural projections to $\Bun_T$ and $\Bun_G$. 

\begin{center}
\begin{equation}
\begin{tikzpicture}[scale=1]

  \draw [->] (-1,1) -- (-2.5,0);
  \draw [->] (1,1) -- (2.5,0);
  \node at (0,1.2){$\overline{\Bun}_B$};
  \node [below] at (2.5,0){$\Bun_T$};
  \node [below] at (-2.5,0){$\Bun_G$};

\end{tikzpicture}
\end{equation}
\end{center}

We denote by $\mathscr{L}_{\overline{\Bun}_B}$ the ratio of the pullbacks of $\mathscr{L}_{\Bun_G}$ and $\mathscr{L}_{\Bun_T}$ to $\overline{\Bun}_B$. Let $\mathscr{L}_{\overline{\Bun}_B'}, \mathscr{L}_{\BS^w}$ be the pullback of $\mathscr{L}_{\overline{\Bun}_B}$ to the corresponding stacks.

Consider a multiplicative local system $c$ on $\mathbb{G}_m$, the category $\Shv^c(\overline{\Bun}_B')$ is defined as the category of sheaves on $\mathscr{L}_{\overline{\Bun}_{B}'}$ which is $\mathbb{G}_m$-equivariant with respect to the character sheaf $c$. Because the restriction of $\mathscr{L}_{\overline{\Bun}_B'}$ to ${\Bun}_B^w$ is canonically trivial for any $w\in W$, we have an equivalence of categories $\Shv^c(\Bun_{B}^w)\simeq \Shv(\Bun_B^w).$ In particular, we can consider the twisted sheaf corresponding to the constant sheaf on $\Bun_B^w$. Let us denote by $j^c_{!, \overline{\Bun}_B^w}\in \Shv^c(\overline{\Bun}_B')$ the $!$-extension of this twisted sheaf to $\overline{\Bun}_B'$ and by $j^c_{!,\overline{\Bun}^{w,\lambda}_{B,\leq \mu}}$ the restriction of $j^c_{!, \overline{\Bun}_B^w}$ to $\overline{\Bun}^{',\lambda}_{B,\leq \mu}$.
% $\Shv^c(\overline{\Bun}_B)$ is defined in a similar way.
\subsection{Main theorem}\label{sect 4.2}
As an application of the construction of $\BS^w$, we want to prove the following theorem in the rest of this paper.

\begin{thm}\label{Kont}
If $\lambda\in \Lambda,\ \mu\in \Lambda^{pos}$ satisfy Condition \ref{X}, then $j^c_{!,\overline{\Bun}^{w,\lambda}_{B,\leq \mu}}$ is universally locally acyclic with respect to the projection $p_G'\colon \overline{\Bun}^{w, \lambda}_{B,\leq \mu}\longrightarrow \Bun_G'$.
\end{thm}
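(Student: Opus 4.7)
The plan is to transport the ULA claim along the proper resolution $\pi\colon \BS^{w,\lambda}_{\leq\mu}\to \overline{\Bun}{}^{w,\lambda}_{B,\leq\mu}$ from Proposition \ref{imp}, and then reduce the resulting statement on the smooth stack $\BS^{w,\lambda}_{\leq\mu}$ to a product local model via the smooth morphism of Proposition \ref{pro l}.

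First, I would pass to the resolution. Let $j^{\BS}\colon \Bun_B^{w,\lambda}\hookrightarrow \BS^{w,\lambda}_{\leq\mu}$ denote the open inclusion. By Proposition \ref{imp}(2)--(3), $\pi$ is proper and an isomorphism over $\Bun_B^{w,\lambda}$, so functoriality together with $\pi_!=\pi_*$ yields
\[
j^c_{!,\overline{\Bun}{}^{w,\lambda}_{B,\leq\mu}}\;\simeq\;\pi_*\bigl(j^{\BS}_!\,\mathsf{k}^c\bigr).
\]
Since ULA over $\Bun_G'$ is preserved by proper pushforward, the task reduces to showing that $j^{\BS}_!\mathsf{k}^c$ is ULA over $\Bun_G'$. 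Next, the identity $\BS^{w,\lambda}_{\leq\mu}=\overline{\Bun}{}^{K,',\lambda}_{B,x,\leq\mu}\times_{\Bun_G''}\calY_w$, with $\calY_w:=\Bun_G''\times_{B\backslash G/B}\tilde Z_w$, combined with Proposition \ref{pro l} and the smoothness of $\Bun_G''\to B\backslash G/B$ (itself a base change of the smooth evaluation $\Bun_G\to \pt/G$), produces a smooth morphism
\[
q\colon \BS^{w,\lambda}_{\leq\mu}\longrightarrow \mathscr{M}_{X,x}'\times \calY_w,
\]
under which $\Bun_B^{w,\lambda}=q^{-1}(U_\mathscr{M}\times \calY_w^\circ)$, where $U_\mathscr{M}=\{(X,\id,x)\}$ is the generic stratum and $\calY_w^\circ=\Bun_G''\times_{B\backslash G/B}\tilde Z_w^\circ$. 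Smooth pullback commutes with $!$-extension of the constant sheaf and preserves ULA, so the problem further reduces to ULA over $\Bun_G'$ of the $!$-extension $\calF$ from $U_\mathscr{M}\times \calY_w^\circ$ to $\mathscr{M}_{X,x}'\times \calY_w$.

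Finally, I would exploit the product structure. The sheaf $\calF$ is an external product $j_{U_\mathscr{M},!}\mathsf{k}\boxtimes j_{\calY_w^\circ,!}\mathsf{k}^c$; its first factor on $\mathscr{M}_{X,x}'$ is constructible and independent of $\Bun_G'$, so the question reduces to ULA of the second factor over $\Bun_G'$ via $\calY_w\to\Bun_G''\to\Bun_G'$. To verify this, I would pass to a smooth surjection $U\to\Bun_G'$ trivializing the smooth proper $G/B$-bundle $\Bun_G''\to\Bun_G'$; the isomorphism $G/B\times_{B\backslash G/B}\tilde Z_w\simeq Z_w$ identifies $\calY_w\times_{\Bun_G'}U$ with $U\times Z_w$, with $\calY_w^\circ$ pulling back to $U\times Z_w^\circ$. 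The sheaf $j_{\calY_w^\circ,!}\mathsf{k}^c$ then pulls back to an external product $\underline{\mathsf{k}}_U\boxtimes j_{Z_w^\circ,!}\mathsf{k}^c$, which is manifestly ULA over $U$; smooth descent of ULA delivers the desired conclusion.

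The main technical obstacle will be the bookkeeping of the twist $c$ through the smooth-local trivialization: one has to verify that after base change along $U\to\Bun_G'$ the pullback of $\mathscr{L}_{\BS^w}$ to $U\times Z_w$ splits, up to a line bundle on $U$, as the pullback of a line bundle from $Z_w$, so that the $c$-twisted $!$-extension factors as the external product asserted above. This should follow from the observation that the fibers of $\Bun_G''\to\Bun_G'$ alter only the second $B$-reduction at $x$ and leave both $\mathscr{L}_{\Bun_G}$ and $\mathscr{L}_{\Bun_T}$ unchanged, so their ratio pulls back trivially along $U$. Once this compatibility is in place, the ULA claim is formal, and identical arguments with $j_*$ or $j_{!*}$ in place of $j_!$ will establish the announced analogues for the $*$-extension and for the intersection cohomology sheaf on $\overline{\Bun}{}^{w,\lambda}_{B,\leq\mu}$.
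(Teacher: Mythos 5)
Your proposal genuinely departs from the paper's proof. The paper invokes Campbell's characteristic-cycle/singular-support criterion (\lemref{ch}): it checks that $\mathscr{L}_{\BS^{w,\lambda}_{\leq\mu}}$ is $\mathcal{O}$ of a divisor supported on the normal-crossing boundary, and that each normal-crossing stratum $(\BS^{w,\lambda}_{\leq\mu})_n$ is smooth over $\Bun_G'$ (realized as a preimage of a smooth substack of $\mathscr{M}'_{X,x}\times\tilde{Z}_w$ under the composite smooth map \eqref{long map}), then applies \lemref{ch} to conclude ULA of $j^c_{!,\BS^{w,\lambda}_{\leq\mu}}$ before pushing forward along the proper $\pi$. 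You instead avoid the singular-support criterion altogether: you factor the same smooth morphism through $\mathscr{M}'_{X,x}\times\mathcal{Y}_w$, write the $!$-extension as a $q$-pullback of an external product, peel off the $\mathscr{M}'_{X,x}$ factor, and reduce ULA of $j^c_{!,\mathcal{Y}_w^\circ}$ over $\Bun_G'$ to a split local model $U\times Z_w$ via a smooth trivialization of the $G/B$-bundle $\Bun_G''\to\Bun_G'$. The final step (proper pushforward preserves ULA) is common to both arguments. Your route is more elementary in that it does not rely on T.~Saito's description of singular support, at the cost of extra bookkeeping for the twist.

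On that bookkeeping, your proposed justification is off. The claim that ``fibers of $\Bun_G''\to\Bun_G'$ leave $\mathscr{L}_{\Bun_G}$ and $\mathscr{L}_{\Bun_T}$ unchanged'' is a category error for $\mathscr{L}_{\Bun_T}$: the relevant line bundle on $\BS^{w,\lambda}_{\leq\mu}$ is pulled back via the map recording the (generalized) $T$-bundle on $C$, which does \emph{not} factor through $\Bun_G''$, so nothing is ``unchanged'' in that sense. The correct reason the twist cooperates is the one the paper records at the start of its proof: $\mathscr{L}_{\BS^{w,\lambda}_{\leq\mu}}$ is canonically trivial on $\Bun_B^{w,\lambda}$, hence isomorphic to $\mathcal{O}(\sum r_i' D_i)$ with each $D_i$ a boundary component. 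By \propref{imp}(4) and smoothness of $q$, each $D_i$ is $q^{-1}(E_i)$ for an irreducible boundary component $E_i$ of $\mathscr{M}'_{X,x}\times\mathcal{Y}_w$, so $\mathscr{L}_{\BS^{w,\lambda}_{\leq\mu}}\simeq q^*\mathcal{O}(\sum r_i' E_i)$; and since the boundary of $\mathscr{M}'_{X,x}\times\mathcal{Y}_w$ decomposes as $\partial\mathscr{M}'_{X,x}\times\mathcal{Y}_w\cup\mathscr{M}'_{X,x}\times\partial\mathcal{Y}_w$, the downstairs line bundle splits as an external product of line bundles on the two factors. This both licenses your pullback step ($j^{\BS}_!\mathsf{k}^c\simeq q^*\mathcal{F}$) and the external-product decomposition; after this correction the rest of your argument goes through.
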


The following lemma of \cite[Proposition 4.3.1]{[Camp]} is obtained by using the description of the singular support given in \cite{[Sai]}. 

We assume that $f: Y\longrightarrow Z$ is a morphism between two smooth algebraic stacks locally of finite type.
\begin{lem}\label{ch}
Consider a normal crossing divisor $D=\sum r_i D_i$ in $Y$. We denote by $\Shv^c(Y)$ the category of D-modules or $\ell$-adic sheaves on $O(D)$ which are $\mathbb{G}_m$-equivariant with respect to the character sheaf $c$. Denote by $Y_n$ the locus of $D$ where locally can be regarded as the transversal intersection of  $n$ hyperplanes. If $Y_n\longrightarrow Z$ is smooth for all $n\geq 0$, then $j^c_{!,Y}$ is universally locally acyclic with respect to the map $$f:Y\longrightarrow Z$$
Here $j^c_{!,Y}\in \Shv^c(Y)$ is the $!$-extension of the twisted constant sheaf on $Y_0$ to $Y$.
\end{lem}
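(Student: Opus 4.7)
The plan is to identify the singular support of $j^c_{!,Y}$ via Saito's description of $!$-extensions across normal crossing divisors, then verify the microlocal criterion for universal local acyclicity (ULA) stratum by stratum.

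First I would reduce to the untwisted situation. By definition, $\Shv^c(Y)$ consists of sheaves on the $\mathbb G_m$-torsor underlying the twisting line bundle that are $\mathbb G_m$-equivariant with respect to the character $c$. Both the singular support and the ULA property with respect to a morphism descend along smooth $\mathbb G_m$-torsors, so it suffices to verify ULA for the untwisted $!$-extension of the constant sheaf on the preimage of $Y_0$ inside this torsor, taken over the composition (torsor $\to Y \to Z$). The pulled-back normal crossing stratification still has smooth strata over $Z$, since the torsor projection is smooth.

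Next, I would invoke Saito's theorem from \cite{[Sai]} describing the singular support of the $!$-extension across a normal crossing divisor: for $j: Y_0 \hookrightarrow Y$ the open inclusion of the complement of $D$, one has $\mathrm{SS}(j_{!,Y}) = \bigcup_{n \geq 0} T^*_{Y_n} Y$, the union of conormal bundles to the strata. The standard microlocal characterization of ULA then says that a constructible sheaf $\mathcal F$ on a smooth stack $Y$ is ULA over $f: Y \to Z$ iff $\mathrm{SS}(\mathcal F)$ is $f$-transversal. Checking this stratum by stratum, transversality of the conormal bundle $T^*_{Y_n} Y$ with respect to $f$ at a point $y \in Y_n$ translates, via the exact sequence $0 \to T^*_{Y_n} Y \to T^* Y|_{Y_n} \to T^* Y_n \to 0$, to the surjectivity of $df_y: T_y Y_n \to T_{f(y)} Z$, i.e.\ to the smoothness of $f|_{Y_n}: Y_n \to Z$ at $y$. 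The hypothesis delivers this for every $n \geq 0$, so $\mathrm{SS}(j_{!,Y})$ is $f$-transversal and $j^c_{!,Y}$ is ULA.

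The main obstacle is applying Saito's theorem with care in the generality of smooth algebraic stacks locally of finite type, uniformly in the D-module and $\ell$-adic contexts. \'Etale-locally on $Y$ one can reduce $D$ to the standard normal crossing divisor in affine space and invoke Saito's formula there, then descend the conclusion. The $\mathbb G_m$-twist requires additional bookkeeping but does not alter the local microlocal picture, since the torsor is smooth. A small amount of care is also needed to precisely formulate $f$-transversality at the stack level, though this becomes routine once $Y$ and $Z$ are smooth locally of finite type.
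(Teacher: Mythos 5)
Your proposal is correct and follows essentially the same route as the paper (and \cite{[Camp]}, to which the paper defers): compute $\mathrm{SS}(j_{!,Y})$ as the union of conormals to the normal-crossing strata via Saito's theory, then observe that $f$-transversality of each $T^*_{Y_n}Y$ is exactly the surjectivity of $df|_{T_y Y_n}$, i.e.\ smoothness of $Y_n\to Z$. The reduction to the untwisted case along the $\mathbb G_m$-torsor of the twisting line is standard and consistent with how the paper handles the twist.
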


%We could consider $c$-twisted D-modules on $B^{K}$. 
We denote by $j^c_{!, \BS^w}\in \Shv^c(\BS^w)$ the $!$-extension of the twisted constant sheaf on $\Bun_B^{w}$ to $\BS^w$ and by $j^c_{!, \BS^{w,\lambda}_{\leq \mu}}$ the restriction of $j^c_{!, \BS^w}$ to the open substack $\BS^{w, \lambda}_{\leq \mu}$. With the construction of $\BS^w$, we can prove Theorem \ref{Kont} easily.

%Note that the $!$-direct image of $j_{!,B_{\leq \mu}^{K,\lambda} }$ along \eqref{17.12} is isomorphic to $j_{!, \overline{\Bun}^{w,\lambda}_{B,\leq \mu}}$. Hence,  we only need to prove that $j_{!,B_{\leq \mu}^{K,\lambda} }$ is universally locally acyclic with respect to:

%\begin{equation}
%    B_{\leq \mu}^{K,\lambda}\longrightarrow \Bun_G'
%\end{equation} (taking proper direct image preserves universally locally acyclicty by definition).

\begin{proof} (of Theorem \ref{Kont})
Because $\mathscr{L}_{\BS^{w,\lambda}_{\leq \mu}}$ is canonically trivial on $\Bun_{B}^{w,\lambda}$, it is of the form ${O}(\sum_i r_i' D_i)$, where $D_i$ is an irreducible component of $\BS^{w, \lambda}_{\leq\mu}\setminus\Bun^{w,\lambda}_{B}$ for any $i$.

By Proposition \ref{imp} (4), $\BS^{w, \lambda}_{\leq\mu}\setminus \Bun^{w,\lambda}_{B}$ (i.e., the boundary of $\Bun^{w,\lambda}_{B}$ in $\BS^{w, \lambda}_{\leq \mu}$) is a normal crossing divisor. Furthermore, the composition of the follows maps 
\begin{equation}\label{long map}
    \begin{split}
        &\BS^{w, \lambda}_{\leq \mu}\simeq \overline{\Bun}^{K,',\lambda}_{B,x,\leq \mu}\underset{B\backslash G/B}{\times} \tilde{Z}_w\longrightarrow \mathscr{M}_{X,x}'\times \Bun_G''\underset{B\backslash G/B}{\times} \tilde{Z}_w\simeq\\
        &\simeq \mathscr{M}_{X,x}'\times (\Bun_G\underset{\pt/G}{\times} \pt/B\underset{\pt/G}{\times} \pt/B)\underset{B\backslash G/B}{\times} \tilde{Z}_w\simeq \mathscr{M}_{X,x}'\times \Bun_G\underset{\pt/G}{\times}\tilde{Z}_w\simeq\\ &\simeq \mathscr{M}_{X,x}'\times \Bun_G'\underset{\pt/B}{\times} \tilde{Z}_w
    \end{split}
\end{equation}
 is smooth. Since the locus $(\BS^{w, \lambda}_{\leq \mu})_n$ (notation of Lemma \ref{ch}) locally can be realized as the preimage of a smooth (relative to $\pt/B$) substack of $\mathscr{M}_{X,x}'\times \tilde{Z}_w$ along the composition of \eqref{long map} and $\mathscr{M}_{X,x}'\times \Bun_G'\underset{\pt/B}{\times} \tilde{Z}_w\longrightarrow \mathscr{M}_{X,x}'\times \tilde{Z}_w$, $(\BS^{w, \lambda}_{\leq \mu})_n$ is smooth over $\Bun_G'$ for any $n$. So, by Lemma \ref{ch}, $j^c_{!, \BS^{w, \lambda}_{\leq \mu}}$ is universally locally acyclic with respect to $\BS^{w,\lambda}_{\leq \mu}\longrightarrow \Bun_G'$.

We note that the restriction of the morphism $\BS^w\longrightarrow \overline{\Bun}_B'$ to $\Bun^{w, \lambda}_{B}$ is an isomorphism and the !-pushforward of $j^c_{!, \BS^{w, \lambda}_{\leq \mu}}$ is isomorphic to $j^c_{!, \overline{\Bun}^{w,\lambda}_{B,\leq \mu}}$. Furthermore, by Corollary \ref{imp}, $\BS^{w,\lambda}_{\leq \mu}\longrightarrow \overline{\Bun}^{w,\lambda}_{B,\leq \mu}$ is proper. Now, the theorem follows from the fact that proper pushforward preserves universally locally acyclicity.
\end{proof}

\subsection*{Acknowledgments}The author thanks Dennis Gaitsgory for his guidance, and for helpful discussions that simplify the original construction. The author thanks Justin Campbell for his detailed explanation of his proof in \cite{[Camp]} and many helpful discussions. The construction of the Kontsevich compactification $\BS^w$ is inspired by his construction in \cite{[Camp]}. {The author thanks the anonymous referee for pointing out errors in the former version and constructive comments, which make this paper more rigorous.} The author thanks Sergey Lysenko, Jonathan Wise, and Qile Chen for the helpful discussions contributed to the paper.

\end{document}